\theoremstyle{plain}
\newtheorem{Thm}{Theorem}[section]
\newtheorem{Lem}[Thm]{Lemma}
\newtheorem{Cor}[Thm]{Corollary}
\newtheorem{Pro}[Thm]{Proposition}
\theoremstyle{definition}
\newtheorem{Def}[Thm]{Definition}
\newtheorem*{thm28}{{\rm 2.8.}~Theorem}
\theoremstyle{remark}
\newtheorem{Rem}[Thm]{Remark}
\numberwithin{equation}{section}
\newcommand{\ITE}[3]{\ifthenelse{#1}{#2}{#3}}
\newcommand\tylda[1]{\ThisStyle{%
		\setbox0=\hbox{$\SavedStyle#1$}%
		\stackengine{-.1\LMpt}{$\SavedStyle#1$}{%
			\stretchto{\scaleto{\SavedStyle\mkern.2mu\AC}{.5150\wd0}}{.6\ht0}%
		}{O}{c}{F}{T}{S}%
}}
\newcommand\tyldaa[1]{\ThisStyle{%
		\setbox0=\hbox{$\SavedStyle#1$}%
		\stackengine{-.1\LMpt}{$\SavedStyle#1$}{%
			\stretchto{\scaleto{\SavedStyle\mkern -1.5mu\AC}{.5150\wd0}}{.5\ht0}%
		}{O}{c}{F}{T}{S}%
}}
\newcommand*\quot[2]{{^{\textstyle #1}\big/_{\textstyle #2}}}
\newcommand{\myData}[1][]{
\author[D.\ Burek]{Dominik Burek}
\address{D.\ Burek{}\\{}
Faculty of Mathematics and Computer Science\\{}Jagiellonian University,
\L{}ojasiewicza 6\\{}30-348 Krak\'{o}w\\{}Poland}
\email{dominik.burek@uj.edu.pl}
}
\newcommand{\myCurrentData}[1][]{
\address{Current address: 
Leibniz Universit{\"a}t Hannover\\{}Fakult{\"a}t f{\"u}r Mathematik und Physik \\{} Welfengarten 1\\{}30167 Hannover}}
\newcommand{\ZZ}{\mathbb{Z}}
\newcommand{\PP}{\mathbb{P}}
\newcommand{\OO}{\mathcal{O}}
\newcommand{\FF}{\mathbb{F}}
\newcommand{\NN}{\mathbb{N}}
\newcommand{\diag}{\operatorname{diag}}
\newcommand{\QQ}{\mathbb{Q}}
\newcommand{\et}{\operatorname{\acute{e}t}}
\newcommand{\QQl}{\mathbb{Q}_{l}}
\newcommand{\Fr}{\operatorname{Frob}}
\newcommand{\id}{\operatorname{id}}
\newcommand{\Fix}{\operatorname{Fix}}
\newcommand{\Frob}{\operatorname{Frob}}
\newcommand{\orb}{\operatorname{orb}}
\newcommand{\age}{\operatorname{age}}
\newcommand{\overbar}[1]{\mkern 1.5mu\overline{\mkern-1.5mu#1\mkern-1.5mu}\mkern 1.5mu}
\newcommand\restrict[1]{\raisebox{-.5ex}{$|$}_{#1}}
\renewcommand{\bar}{\overbar}
\def\restrict#1{\raise-.5ex\hbox{\ensuremath|}_{#1}}
\NewDocumentCommand{\lam}{m}{%
	\group_begin:
	\tl_set:Nn \l_tmpa_tl {#1}
	\hbox_set:Nn \l_tmpa_box {\ensuremath{\l_tmpa_tl}}
	\dim_set:Nn \l_tmpa_dim {\box_ht:N \l_tmpa_box}
	\dim_add:Nn \l_tmpa_dim {\box_dp:N \l_tmpa_box}
	\dim_add:Nn \l_tmpa_dim {0.3em}
	\dim_set:Nn \l_tmpb_dim {1.2\l_tmpa_dim}
	\tl_replace_all:Nnn \l_tmpa_tl {,} {,\allowbreak}
	\ifmmode%
	\text{\fontsize{\l_tmpa_dim}{\l_tmpb_dim}\selectfont}
	\else%
	{\fontsize{\l_tmpa_dim}{\l_tmpb_dim}\selectfont}
	\fi
	\ensuremath{\l_tmpa_tl}
	\ifmmode%
	\text{\fontsize{\l_tmpa_dim}{\l_tmpb_dim}\selectfont}
	\else%
	{\fontsize{\l_tmpa_dim}{\l_tmpb_dim}\selectfont}
	\fi
	\group_end:
}
\begin{document}
	\title[Higher dimensional analogon of Borcea-Voisin CY's, Hodge numbers and $L$-functions]{Higher dimensional analogon of Borcea-Voisin Calabi-Yau manifolds, their Hodge numbers and $L$-functions}
	\myData
	
	\begin{abstract} 
		 We construct a series of examples of Calabi-Yau manifolds in an arbitrary dimension and compute the main invariants. In particular, we give higher dimensional generalization of Borcea-Voisin Calabi-Yau threefolds.
		 
		 We give a method to compute a local zeta function using the Frobenius morphism for orbifold cohomology introduced by Rose. We compute Hodge numbers of the constructed examples using orbifold Chen-Ruan cohomology.\end{abstract}
	
	\subjclass[2010]{Primary 14J32; Secondary 14J40, 14E15}
	\keywords{Calabi--Yau manifolds, Borcea-Voisin construction, crepant resolution, Chen-Ruan cohomology.}
	\maketitle
	
	
	\section{Introduction}
	

This paper is devoted to the construction of a higher dimensional generalization of Borcea-Voisin Calabi-Yau threefolds. This work is motivated by the limited range of examples of Calabi-Yau manifolds in dimensions at least 4 which are neither complete intersection in projective spaces nor hypersurfaces in toric space.  

The first series of examples of Calabi-Yau threefolds with a mirror symmetry phenomenon were constructed independently by C.\ Borcea (\cite{BorceaC}) and C.\ Voisin (\cite{V}). This construction can be regarded both as a method of constructing new Calabi-Yau $3$-folds and also as a way to construct mirror pairs. The Borcea-Voisin construction involves a $K3$ surface $S$ and an elliptic curve $E$ equipped with a not period preserving involutions $\gamma_{S}\colon S\to S$, $\alpha_{E}\colon E\to E$, respectively. The quotient $(S\times E)/(\gamma_{S}\times \alpha_{E})$ has a~resolution of singularities which is a Calabi-Yau threefold. A mirror Calabi-Yau threefold is obtained by the same construction using a mirror $K3$ surface coming from Nikulin's classification of $K3$ surfaces with non-symplectic involution (\cite{Nikulin66}). Moreover Borcea shows the coherence of this construction with Batyrev's construction for hypersurfaces in toric varieties (\cite{Batyrevdual}). He observed that the mirror resolution should hold also in higher dimensions and worked out several cases in four and five dimension. Mirror symmetry for $K3$ surfaces with a non-symplectic involution was generalized to $K3$ surfaces with purely non-symplectic automorphisms (\cite{CamparinPriddis}, \cite{AMBS}). 

Various ideas have been used to give generalizations of the Borcea-Voisin construction, however not many of them giving Calabi-Yau manifold have results in mirror symmetry. C. Vafa and E. Witten studied similar constructions in \cite{VafaWitten} as a resolution of singularities of $(E\times E\times E)/(\ZZ_2\oplus \ZZ_2)$ and $(E\times E\times E)/(\ZZ_3\oplus \ZZ_3)$, where $E$ is an elliptic curve and $\ZZ_2$ (resp. $\ZZ_3$) acts as involution (resp. automorphism of order 3). This approach leads to abstract physical models studied by L. Dixon, J. Harvey, C. Vafa, E. Witten in \cite{DHVW1, DHVW2}. More generally, quotients of products of tori by an abelian extension of a~group $\ZZ_2\oplus \ZZ_2$ were classified by R. Donagi, and K. Wendland in \cite{DonagiWendland}. Calabi-Yau threefolds arising via this construction include Vafa-Witten threefolds, Borcea-Voisin threefolds and Schoen's threefolds (\cite{Schoen}).

A.~Cattaneo and A. Garbagnati  
in \cite{C} observed that the Borcea-Voisin construction can be generalized in a different way. Denote by $S_{d}$ a $K3$ surface admitting a purely non-symplectic automorphism $\gamma_{d}$ of order $d$ and let $E_d$ be an elliptic curve with not period preserving automorphism $\alpha_{d}$ of order $d.$ Then taking on the first factor an automorphism $\phi_{d}$ and on the second factor we take $\alpha_{d}^{d-1}$ we consider the quotient $(S_d\times E_d)/(\gamma_{d}\times \alpha_{d}^{d-1})$. Their paper is organized based upon the order of the not period preserving automorphisms $\gamma_{d}$ and $\alpha_{d}$ which must be either $2,$ $3$, $4,$ or $6.$ In each case, the authors explicitly constructed resolutions of singularities of $(S_d\times E_d)/(\gamma_{d}\times \alpha_{d}^{d-1})$ and study the properties of the resulting Calabi-Yau 3-folds. In the case of order three the resulting threefolds are already known (see J. C. Rohde \cite{Rohde}, \cite{Rohde2} and J. Dillies \cite{DilliesGen}). Incidentally Dillies considered also quotient of $S\times S'$ by the cyclic group $\ZZ_{p}$, where $S,$ $S'$ are $K3$ surfaces admitting purely non-symplectic automorphisms of prime order $p>2,$ and prove that in most cases a crepant resolution does not exists. Very recently Cattaneo, Garbagnati and M. Penegini in \cite{CGP} considered two $K3$ surfaces $S_1$ -- double cover of a del Pezzo surface and $S_2$ admitting an~ elliptic fibration $\pi\colon S_2\to \PP^2.$ Then the quotient of $(S_1\times S_2)/(i_1\times i_2)$, where $i_{1}$ is the involution of $S_1$ coming from the~double cover structure and $i_{2}$ is the elliptic involution admits a crepant resolution of singularities.

It is natural to ask whether the generalization of the original Borcea-Voisin construction to higher dimension by allowing more factors works in higher dimensions. A. Chiodo, E. Kalashnikov and D. C. Veniani in \cite{Kalach} studied this in terms of semi Calabi-Yau orbifolds by taking
$\displaystyle \prod_{i=1}^{2n}\left(\mathcal{X}_{i}^{\lor}\right)/(\sigma_{i}, \ldots, \sigma_{2n}),$ where $\mathcal{X}_{i}$ is a Calabi-Yau stack with involution $\sigma_{i}.$ Nevertheless their work does not concern the existence of crepant resolutions and mostly aims at generalizing the mirror symmetry aspects.

On the other hand S. Cynk and K. Hulek in \cite{CH} studied where one can extend the classical construction of Borcea-Voisin as far as taking the product of elliptic curves and resolving the quotient in a crepant way. The authors proved using an iterative argument that for an elliptic curve $E_{d}$ admitting a~not period preserving automorphism of order $d=2,3,4,$ the quotient of $E_{d}^n$, by a group isomorphic to $\ZZ_{d}^{n-1}$ and preserving the volume form, admits a crepant resolution of singularities. The missing degree $d=6$ cannot be handled by the original approach, it was settled in \cite{Burii3} using a toric crepant resolution of singularities. 

The present paper investigates higher dimensional construction of Calabi-Yau manifolds which generalizes the classical Borcea-Vosin construction by merging ideas of Cynk-Hulek and Cattaneo-Garbagnati. Precisely, we prove the following:
\begin{thm28}\label{constr} Let $E_d$ be an elliptic curve with a not period preserving automorphism of order $d$ and $S_{d}$ be a $K3$ surface with purely non-symplectic automorphism of order $d\in\{2,3,4,6\}.$ Consider the following group $$G_{d,n}:=\{(m_{1},m_{2}, \ldots, m_{n})\in \ZZ_{d}^{n} \colon m_{1}+m_{2}+\ldots + m_{n}=0\}\simeq \ZZ_{d}^{n-1}$$ acting naturally on $S_d\times E_{d}^{n-1}$. Then the quotient variety $(S_d\times E_{d}^{n-1})/G_{d,n}$ admits a crepant resolution of singularities $Y_{d,n}$. Consequently $Y_{d,n}$ is an $(n+1)$-dimensional Calabi-Yau variety.\end{thm28}
Our advance uses both ideas of taking many factors (in spirit of Cynk-Hulek) and $(d-1)$-th power of an automorphism on a second factor (inspired by Cattaneo-Garbagnati).

In order to study a crepant resolution of singularities of manifolds $(S_d\times E_{d}^{n-1})/G_{d,n}$ we shall study Calabi-Yau manifolds constructed in greater generality, as a resolution of singularities of $$(X_{1}\times X_{2}\times \ldots \times X_{n})/\ZZ_{d}^{n-1},$$ where $X_{i}$ denotes a variety of Calabi-Yau type (i.e. $K_{X_{i}}= \OO_{X_{i}}$) with $pnpp$ (purely not period preserving automorphism, see Def. \ref{non-per}) $\phi_{i,d}\colon X_i\to X_i$ of order $d$, where $d=2,$ $3,$ $4$ or $6.$ We prove that if  $\Fix(\phi_{i,d}^{k})$ is a smooth divisor for $k\mid d$ and $i=2,\ldots, n$ and $\phi_{1,d}$ satisfies a certain assumption $\hyperref[assump] {\textup{\textbf{A\textit{d}}}}$ then there exists a crepant resolution $\mathcal{X}_{d,n}$ which is a Calabi-Yau type manifold (Thm. \ref{constr}). Moreover we prove that if $X_{1},$ $X_{2},$ $\ldots,$ $X_{n}$ are Calabi-Yau manifolds, then $\mathcal{X}_{d,n}$ is also Calabi-Yau manifold (Cor. 3.7). 


We shall give a formula for the Hodge numbers of $\mathcal{X}_{d,n}$ using Chen-Ruan orbifold cohomology theory (\cite{CR}). In \cite{Burii3} we have computed Hodge numbers in the special case of $X_{i}=E_{d}$ (for $i=1,2,\ldots, n$) by a careful study of the action of $\phi_{i,d}$ on $\Fix(\phi_{i,d}^k)$ for $k\mid d$ and $i=1,\ldots, n.$ This approach does not generalize to the current setup, in the present paper we use characteristic polynomials of the eigenspaces of the action of $\phi_{i,d}$
on the cohomology groups of $X_{i}$ (Thm. \ref{wzorek}). The novelty of our approach consists in the improvement of the use of orbifold formula (no orbit counting of the group action) and possibility implementation of the Hodge numbers formula in \texttt{MAPLE}. Moreover, due to the wide generality of the~considered manifolds, the formula can be used for many other examples. 

We shall study in more detail a crepant resolution of the quotient $(S_{d}\times E_{d}^{n-1})/\ZZ_{d}^{n-1}.$
Comparing the Euler characteristics of the constructed manifolds computed from our formula for Hodge numbers with the stringy Euler number we obtain new relations among invariants of $K3$ surfaces with purely non-symplectic automorphism of order $3,$ $4$ and $6$ (section 4). 

In section 5 we propose an effective method to compute local zeta functions of constructed generalized Calabi-Yau manifolds of Borcea-Voisin type. By the Weil conjecture (analogon of the Riemann hypothesis) the zeta function of a smooth projective variety defined over $\FF_q$ has the following form 
$$Z_{q}(t)=\frac{P_{1,q}(t)\cdot P_{3,q}(t)\cdot \ldots \cdot P_{2d-1,q}(t)}{P_{0,q}(t)\cdot P_{2,q}(t)\cdot \ldots\cdot P_{2d,q}(t)},$$ where $P_{i,q}(t):=\det\left(1-\Fr_{q}^{*}t\mid H^{i}_{\et}(X,\QQl)\right)$ is the characteristic polynomial of the Frobenius Galois action.
Using Frobenius polynomials $P_{i,p}$ of the~reduction $\bar{X}_{p}$ of a variety $X$ over $\QQ$ modulo a good prime $p$, we define the $L$-series as:
$$L_{}\left(H^{i}_{\et}\left(\overline{X},\QQl\right), s\right):=(*)\prod_{p\in\mathcal{P}}\frac{1}{P_{i,p}(p^{-s})},$$
where $\mathcal{P}$ is the set of primes of good reduction and $(*)$ denotes suitable Euler factors for the primes of bad reduction. In the special case of rigid Calabi-Yau threefolds, the $L$-series is the $L$-series of a modular form (\cite{YuiGou}, \cite{Dieulefait}). Y. Goto, R. Livn{\'e} and N. Yui in \cite{GOTOYUI} computed $L$-function of Borcea-Voisin Calabi-Yau threefolds over $\QQ.$ 

In dimensions greater than 3 Cynk and Hulek in \cite{CH} proved modularity and determined the~corresponding cusp forms of the constructed Calabi-Yau manifolds. We are not aware of any further examples of Calabi-Yau manifold of dimension greater than 3 with explicitly computed $L$-function.

Using the description of the Frobenius action on the orbifold cohomology (\cite{Rose}) we prove that the~zeta function of a generalized Borcea-Voisin manifold is a tensor product of factors associated to natural submotives for fixed loci of elements of the group $\ZZ_{d}^{n-1}$ (Thm. \ref{zetawzorek}). Our approach gives an effective way of computation of $L$-functions of considered manifolds and can be implemented for any dimension.
 
As the zeta function is of arithmetic nature we  cannot get a general formula as in the case of Hodge numbers. Our approach allows us to compute the zeta function in explicit examples, in section 5.1 we give details of computations in two particular cases. The first example involves a family of $K3$ surfaces with known zeta function (\cite{OnoPeniston}), the second one is a Calabi-Yau manifold with particularly interesting shape of the Hodge diamond. 
	
	
	
\subsection*{Acknowledgments} This paper is a part of author's PhD thesis. I am deeply grateful to my advisor S\l{}awomir Cynk for his enormous help. I would like to thank Matthias Sch{\"u}tt for helpful suggestions and comments. The author is supported by the National Science Center of Poland grant no. 2019/33/N/ST1/01502 and National Science Center of Poland grant no. 2020/36/T/ST1/00265. 

The final part of this work was conducted during the stay at the Leibniz Universit{\"a}t in Hannover. I would like to thank the institute for hospitality. Finally I would like to thank the anonymous referee for remarks and advices that substantially improve the presentation of this paper. 
	
\section{Kummer type Calabi-Yau manifolds}

\subsection{Borcea-Voisin construction}

The first example of a family of Calabi-Yau threefolds which is symmetric with respect to Mirror Symmetry was constructed independently by C.\ Borcea (\cite{BorceaC}) and C.~Voisin (\cite{V}). Their construction involves an elliptic curve $E$ and a $K3$ surface $S$ with a not period preserving involution $\alpha_{E}\colon E\to E$ and a non-symplectic involution $\gamma_{S}\colon S\to S$. They also computed the Hodge numbers of the~constructed Calabi-Yau threefold.
	
\begin{Thm}[\cite{BorceaC,V}]{} The quotient $(S\times  E)/(\gamma_{S}\times \alpha_{E})$ has a crepant resolution of singularities $X$ which is a Calabi-Yau threefold. Moreover
		$$h^{1,1}(X)=11+5N-N'\quad \textrm{and}\quad h^{2,1}(X)=11+5N'-N,$$
		where $N$ is the number of curves in $\Fix(\gamma_{S})$ and $N'$ is the sum of their genera.
\end{Thm}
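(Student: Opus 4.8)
The plan is to realise $X$ as a crepant resolution of the global quotient $Y=(S\times E)/\langle\iota\rangle$, where $\iota=\gamma_{S}\times\alpha_{E}$, and to read off the Hodge numbers from the Chen--Ruan decomposition. First I would record the local geometry. Since $\gamma_{S}^{*}\omega_{S}=-\omega_{S}$ and $\alpha_{E}^{*}\,dz=-dz$, the involution $\iota$ preserves the holomorphic three-form $\omega_{S}\wedge dz$. At a point of $\Fix(\gamma_{S})$ the two tangent eigenvalues of $\gamma_{S}$ multiply to $-1$, hence equal $(+1,-1)$; therefore $\Fix(\gamma_{S})$ is a disjoint union of $N$ smooth curves on which $\gamma_{S}$ acts by $-1$ on the normal bundle, while $\Fix(\alpha_{E})$ consists of $4$ points. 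Thus $\Fix(\iota)=\bigsqcup_{i=1}^{N}\bigsqcup_{j=1}^{4}C_{i}\times\{p_{j}\}$ is a disjoint union of $4N$ smooth curves, and in transverse coordinates $(u,v,w)\mapsto(u,-v,-w)$ the singularities of $Y$ are transverse $A_{1}$. These admit a crepant resolution $X\to Y$ by the fibrewise minimal resolution, and the invariant three-form extends to a trivialising section of $K_{X}$, so $K_{X}\cong\OO_{X}$.

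Next I would set up the cohomological decomposition. For a crepant resolution of a global quotient with transverse $A_{1}$ singularities the rational cohomology agrees with Chen--Ruan orbifold cohomology: the untwisted sector contributes the invariants $H^{p,q}(S\times E)^{\iota}$, and the nontrivial element of $\langle\iota\rangle$ acts on the two-dimensional normal space of each fixed curve by $-\id$, so its age is $\tfrac12+\tfrac12=1$ and the twisted sector contributes $\bigoplus_{i,j}H^{p-1,q-1}(C_{i})$. Hence
\[
 h^{p,q}(X)=\dim H^{p,q}(S\times E)^{\iota}+\sum_{i=1}^{N}\sum_{j=1}^{4}h^{p-1,q-1}(C_{i}).
\]
In particular the invariant parts of $H^{1,0}$ and $H^{2,0}$ vanish, because $H^{1,0}(S)=0$ and $H^{1,0}(E)$ is anti-invariant, while the twisted sectors live in bidegrees $\geq(1,1)$; thus $h^{1,0}(X)=h^{2,0}(X)=0$ and $X$ is Calabi--Yau.

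It remains to evaluate the two pieces. Writing $H^{2}(S;\CC)=H^{2}_{+}\oplus H^{2}_{-}$ for the $\gamma_{S}^{*}$-eigenspaces and $r=\dim H^{2}_{+}=\dim H^{1,1}_{+}(S)$ (as $H^{2,0}(S),H^{0,2}(S)$ are anti-invariant), a Künneth computation using $H^{1}(S)=0$, the anti-invariance of $H^{1}(E)$, and the invariance of $H^{0}(E),H^{2}(E)$ gives $\dim H^{1,1}(S\times E)^{\iota}=r+1$ and $\dim H^{2,1}(S\times E)^{\iota}=21-r$. The twisted sectors add $\sum_{i,j}h^{0,0}(C_{i})=4N$ to $h^{1,1}$ and $\sum_{i,j}h^{1,0}(C_{i})=4\sum_{i}g_{i}=4N'$ to $h^{2,1}$, so $h^{1,1}(X)=r+1+4N$ and $h^{2,1}(X)=21-r+4N'$. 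Finally the topological Lefschetz formula $\sum_{i}(-1)^{i}\tr(\gamma_{S}^{*}\mid H^{i}(S))=\chi(\Fix(\gamma_{S}))$ reads $1+(2r-22)+1=2N-2N'$, i.e.\ $r=N-N'+10$; substituting yields $h^{1,1}(X)=11+5N-N'$ and $h^{2,1}(X)=11+5N'-N$. The main obstacle is justifying the orbifold decomposition — that resolving the transverse $A_{1}$ locus shifts cohomology by exactly the age-one twisted sector $\bigoplus H^{*-2}(C_{i})$ — together with the clean identification of the abstract eigenspace dimension $r$ with the geometric data $N,N'$, which the Lefschetz count supplies.
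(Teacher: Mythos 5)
Your proposal is correct, and it is essentially the route the paper itself takes: the paper states this theorem as a citation of Borcea and Voisin, but its own machinery (Yasuda's theorem plus the Chen--Ruan decomposition, specialized in Section \ref{rzad2} to $d=2$, $n=2$) reproduces exactly your computation --- untwisted sector $=$ invariant K\"unneth classes, twisted sector $=$ age-one contributions $\bigoplus H^{p-1,q-1}(C_i)$ over the $4N$ fixed curves. The only cosmetic difference is that you derive the relation $r=10+N-N'$ from the topological Lefschetz fixed point formula, whereas the paper quotes it from \cite{V}; this is the same tool the paper itself deploys later (for $S_6$) to obtain analogous relations, so nothing substantive separates the two arguments.
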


The classification of $K3$ surfaces with non-symplectic involution was given by Nikulin (\cite{NikulinClass}).
From Nikulin's classification it follows that a mirror of Borcea-Voisin Calabi-Yau threefold is a Borcea-Voisin Calabi-Yau threefold associated to the mirror $K3$ surfaces $\tylda{S}.$ 
In \cite{C} A. Cattaneo and A. Garbagnati generalized the Borcea-Voisin construction allowing a non-symplectic automorphisms of a $K3$ surfaces of higher degrees i.e. $3,$ $4$ and $6.$

\begin{Thm}[\cite{C}]
	Let $S_{d}$ be a $K3$ surface admitting a purely non-symplectic automorphism $\gamma_{d}$ of order $d=3,4,6.$ Let $E_{d}$ be an elliptic curve admitting a not period preserving automorphism $\alpha_{d}$ of order $d.$ Then the~quotient $(S_{d}\times E_{d})/(\gamma_{d}\times \alpha_{d}^{n-1})$ is a singular variety which admits a crepant resolution of singularities $\tyldaa{(S_{d}\times E_{d})/(\gamma_{d}\times \alpha_{d}^{d-1})}$. In particular $\tyldaa{(S_{d}\times E_{d})/(\gamma_{d}\times \alpha_{d}^{d-1})}$ is a Calabi-Yau threefold.
\end{Thm}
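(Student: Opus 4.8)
The plan is to present the quotient as a Gorenstein (Calabi--Yau) orbifold, to classify its cyclic quotient singularities, to resolve them crepantly, and finally to check that the intermediate Hodge numbers $h^{1,0}$ and $h^{2,0}$ vanish. First I would normalize $\gamma_d^*\omega_{S_d}=\zeta_d\,\omega_{S_d}$ and $\alpha_d^*\omega_{E_d}=\zeta_d\,\omega_{E_d}$, where $\zeta_d$ is a primitive $d$-th root of unity and $\omega_{S_d}$, $\omega_{E_d}$ are the holomorphic $2$- and $1$-forms; that $\gamma_d$ acts on $H^{2,0}(S_d)$ by a primitive root of order exactly $d$ is precisely the meaning of ``purely non-symplectic''. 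Writing $\sigma:=\gamma_d\times\alpha_d^{d-1}$ and using $\alpha_d^{d-1}=\alpha_d^{-1}$, the induced action on the holomorphic $3$-form $\omega:=\omega_{S_d}\boxtimes\omega_{E_d}$ of $S_d\times E_d$ is $\sigma^*\omega=\zeta_d\cdot\zeta_d^{-1}\,\omega=\omega$. Thus $\sigma$ preserves the volume form, so at every fixed point it acts on the tangent space with determinant $1$; consequently the quotient $Y:=(S_d\times E_d)/\langle\sigma\rangle$ is Gorenstein and $\omega$ descends to a trivialization of $K_Y$ on the smooth locus.

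Next I would describe the singular locus and its local models. The non-free locus is $\bigcup_{k\mid d}\Fix(\sigma^k)$, and since $\sigma^k=\gamma_d^k\times\alpha_d^{-k}$ one has $\Fix(\sigma^k)=\Fix(\gamma_d^k)\times\Fix(\alpha_d^k)$. On $E_d$ each $\Fix(\alpha_d^k)$ is a finite set of points, whereas on the $K3$ surface the classification of purely non-symplectic automorphisms shows that each $\Fix(\gamma_d^k)$ is a disjoint union of smooth curves and isolated points; hence $\Fix(\sigma^k)$ is a disjoint union of smooth curves (curve $\times$ point) and isolated points (point $\times$ point). Linearizing along a fixed curve, the eigenvalue tangent to the curve is $1$, so by the determinant-one condition the two transverse eigenvalues are $\lambda$ and $\lambda^{-1}$, giving a transverse singularity of type $A_{e-1}$ with $e=\operatorname{ord}(\lambda)$; at an isolated fixed point one obtains a three-dimensional cyclic Gorenstein quotient $\tfrac1e(a_1,a_2,a_3)$ with $a_1+a_2+a_3\equiv 0 \pmod e$.

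Then I would resolve and conclude. Each transverse $A_{e-1}$ singularity has a crepant minimal resolution, and each three-dimensional Gorenstein cyclic quotient admits a crepant toric resolution obtained from a regular triangulation of its junior simplex; as the singular strata are disjoint and the toric resolutions are canonical, they glue to a global crepant resolution $f\colon\mathcal X\to Y$ (in dimension three a crepant resolution exists for Gorenstein quotient singularities in any case). Crepancy yields $K_{\mathcal X}=f^*K_Y\cong\OO_{\mathcal X}$, so $\mathcal X$ carries a nowhere-vanishing holomorphic $3$-form. Since a crepant resolution of a quotient does not change the spaces of global holomorphic forms, $H^{p,0}(\mathcal X)\cong H^{p,0}(S_d\times E_d)^{\langle\sigma\rangle}$; because $H^{1,0}(S_d\times E_d)=\CC\,\omega_{E_d}$ and $H^{2,0}(S_d\times E_d)=\CC\,\omega_{S_d}$ carry the non-trivial characters $\zeta_d^{-1}$ and $\zeta_d$ of $\langle\sigma\rangle$, both invariant subspaces vanish, so $h^{1,0}(\mathcal X)=h^{2,0}(\mathcal X)=0$ and $\mathcal X$ is a Calabi--Yau threefold.

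The hard part is the geometric input of the second step together with the gluing of the third: one must invoke the precise classification of the fixed loci of the powers $\gamma_d^k$ (their smoothness and curve/point structure) to pin down the local eigenvalue data, verify that the transverse singularities along the fixed curves are genuinely of type $A_{e-1}$ rather than a general $\tfrac1e(a,b)$ (which need not admit a crepant resolution), and arrange the local toric resolutions compatibly so as to obtain a single global crepant model.
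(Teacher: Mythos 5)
The paper itself does not prove this statement: it is quoted from Cattaneo--Garbagnati \cite{C}, where the crepant resolution is constructed explicitly, and inside the paper it is recovered as the $n=2$ case of Theorem \ref{constr}, i.e.\ as a consequence of Proposition \ref{pro6} (itself cited from \cite{Burii3}, \cite{CH}) together with the observation that a purely non-symplectic automorphism of a $K3$ surface automatically satisfies condition $\textup{\textbf{A}}d$, while the fixed loci of the powers of $\alpha_d$ on $E_d$ are smooth divisors. Your route is genuinely different and essentially sound: you check that $\sigma=\gamma_d\times\alpha_d^{-1}$ fixes the holomorphic $3$-form, so every stabilizer lands in $\SL_3(\CC)$ and the quotient is Gorenstein; you then appeal to the general dimension-three theory of Gorenstein quotient singularities (Roan's theorem, or Bridgeland--King--Reid's $G$-Hilbert scheme) for existence of a crepant resolution, and finish with the standard identification $H^{p,0}(\mathcal X)\cong H^{p,0}(S_d\times E_d)^{\langle\sigma\rangle}$ to kill $h^{1,0}$ and $h^{2,0}$. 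What this soft argument buys is brevity and independence from the classification of fixed loci; what it loses is the explicit description of the exceptional locus, which \cite{C} needs for its Hodge-number computations and elliptic fibrations (the present paper sidesteps that need differently, via Chen--Ruan cohomology and Theorem \ref{TY3}).

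One claim in your third step is genuinely false as stated: the singular strata are \emph{not} disjoint. For $d=4$, an isolated fixed point of $\gamma_4$ has linearisation $(\zeta_4^2,\zeta_4^3)$, hence lies on a curve fixed by $\gamma_4^2$; pairing it with a fixed point of $\alpha_4$ gives a point of $Y$ with local model $\tfrac14(2,3,3)$, whose singular locus contains a curve of $A_1$-points passing through the $\ZZ_4$-point. Analogous incidences occur for $d=6$. So ``resolve each stratum separately and glue'' does not work literally, and since crepant resolutions in dimension three are not unique (they differ by flops), a ``canonical'' compatible choice requires justification --- this is precisely the delicate point that condition $\textup{\textbf{A}}d$ and the explicit analysis in \cite{C} are designed to handle. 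The correct repair is the one you mention only parenthetically: invoke the global existence theorem directly, e.g.\ $G\textup{-Hilb}(S_d\times E_d)\to Y$ is a crepant resolution by Bridgeland--King--Reid because the action is Gorenstein; with that substitution your proof goes through.
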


The authors gave a detailed crepant resolution and computed the Hodge numbers of the resulting algebraic varieties. For all possible orders they computed the Hodge numbers of these varieties and constructed elliptic fibrations on them, in \cite{Burii1} we gave much simpler derivations of formulas for Hodge numbers using Chen-Ruan cohomology \ref{Formula}. 

\subsection{Generalisation}

Let us fix a primitive $d$-th ($d\in\{2,3,4,6\})$ root of unity $\zeta_{d}$ once and for all. Let $X$ be a complex smooth projective manifold of dimension $n$ with trivial canonical bundle (we shall call them of \textit{Calabi-Yau type}). Any automorphism $\eta\colon X\to X$ of order $d$ satisfies 
$$\eta_{}^{*}\left(\omega_{X_{}}\right)=\zeta_d^k\omega_{X_{}},$$ where $\omega_{X_{}}\in H^{n,0}(X)$ denotes a non-zero canonical form (\textit{period} of $X$) and $k$ is a positive integer. 

\begin{Def} \label{non-per} An automorphism $\eta\colon X\to X$ of order $d$ is called \textit{purely not period preserving} ($pnpp$) iff $\eta_{}^{*}\left(\omega_{X_{}}\right)=\zeta_d\omega_{X_{}}.$ \end{Def} 

\begin{Rem} 1) The above definition depends on the choice of $\zeta_{d}$ but it is independent of the choice of $\omega_{X}.$

2) If $X$ is a $K3$ surface then $\omega_{X}$ is a symplectic form, so $pnpp$ automorphism is a direct generalization of a purely non-symplectic automorphism to an arbitrary dimension.  \end{Rem}
Consider the following assumptions:
\label{assump}
Condition $\label{a3}\bf{A}3$ (for $d=3$)
\begin{enumerate}
	\item $\Fix(\eta_{})$ is a disjoint union of submanifolds of codimension at most 2. In particular $\eta_{}$ has linearisation of the form
	\begin{itemize} 
		\item $(\zeta_{3}^{2}, 1, 1, \ldots, 1)$ near a component of codimension one of $\Fix(\eta_{})$,
		\item $(\zeta_{3}^{}, \zeta_{3}, 1,1, \ldots, 1)$ near a component of codimension two of $\Fix(\eta_{})$.
	\end{itemize}
\end{enumerate}

Condition $\bf{A}4$ (for $d=4$)
	\begin{enumerate}
		\item $\Fix(\eta_{})$ is a disjoint union of submanifolds of codimension at most 3. In particular $\eta_{}$ has linearisation of the form
		\begin{itemize} 
			\item $(\zeta_{4}^{3}, 1, 1, \ldots, 1)$ near a component of codimension one of $\Fix(\eta_{})$,
			\item $(\zeta_{4}^{}, \zeta_{4}^{2}, 1,1, \ldots, 1)$ near a component of codimension two of $\Fix(\eta_{})$,
			\item $(\zeta_{4}^{}, \zeta_{4}^{},\zeta_{4}, 1,1, \ldots, 1)$ near a component of codimension three of $\Fix(\eta_{})$.
		\end{itemize}
	\end{enumerate}
	



Condition $\bf{A}6$ (for $d=6$)
\begin{enumerate} 
	\item $\Fix(\eta_{})$ is a disjoint union of submanifolds of codimension at most 3. In particular $\eta_{}$ has linearisation of the form
	\begin{itemize} 
		\item $(\zeta_{6}^{5}, 1, 1, \ldots, 1)$ near a component of codimension one of $\Fix(\eta_{})$,
		\item $(\zeta_{6}^{4}, \zeta_{6}, 1,1, \ldots, 1)$ or $(\zeta_{6}^{3}, \zeta_{6}^{2}, 1,1, \ldots, 1)$ near a component of codimension two of $\Fix(\eta_{})$,
	\end{itemize}
	\item $\Fix(\eta_{}^{2})\setminus \Fix(\eta_{})$ is a disjoint union of smooth submanifolds of codimension at most $2,$ so $\eta_{}^2$ has linearisation of the form $(\zeta_{6}^4, 1, 1, \ldots, 1)$ or $(\zeta_{6}^2,\zeta_{6}^2, 1, 1, \ldots, 1)$ along any component of $\Fix(\eta_{}^{2})\setminus \Fix(\eta_{})$,
	\item $\Fix(\eta_{}^{3})\setminus \Fix(\eta_{})$ is a disjoint union of smooth divisors, so $\eta_{}^3$ has linearisation of the form $(-1, 1, 1, \ldots, 1)$ along any component of $\Fix(\eta_{}^{3})\setminus \Fix(\eta_{})$,
	\item\label{zalozenie}the automorphism $\eta_{}$ has a local linearisation of the form $(\zeta_{6}^{2}, \zeta_{6}^{2},\zeta_{6}^{}, 1,1, \ldots, 1)$ along any codimensional $3$ component of $\Fix(\eta_{}).$
\end{enumerate}

We have the following:

\begin{Pro}[\cite{Burii3}, \cite{CH}]\label{pro6} Let $X_{1}$ and $X_{2}$ be projective manifolds of Calabi-Yau type and a $pnpp$ automorphism $\eta_{i}\colon X_i\to X_i$ of order $d,$ where $d\in\{2,3,4,6\}.$ Assume moreover, that the automorphism $\eta_1$ satisfies condition $\hyperref[assump]{\textup{\textbf{A\textit{d}}}}$ and $\Fix(\eta_{2}^k)$ for $k\mid d$ is a smooth divisor in $X_2.$ Then the quotient $(X_1\times X_2)/(\eta_1\times\eta_2^{d-1})$ admits a crepant resolution of singularities $X$. Moreover the automorphism $\id\times \eta_2$ lifts to a $pnpp$ automorphism of $X$ satisfying $\hyperref[assump]{\textup{\textbf{A\textit{d}}}}$.\end{Pro}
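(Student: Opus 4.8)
The plan is to present the quotient as a Gorenstein Calabi-Yau variety and then to resolve its singularities crepantly by a canonical, $G$-equivariant sequence of blow-ups whose local models are read off directly from assumption $\hyperref[assump]{\textup{\textbf{A\textit{d}}}}$. First I would record that $g:=\eta_1\times\eta_2^{d-1}$ preserves the volume form of the product: since $\eta_1^{*}\omega_{X_1}=\zeta_d\,\omega_{X_1}$ and $(\eta_2^{d-1})^{*}\omega_{X_2}=\zeta_d^{\,d-1}\omega_{X_2}=\zeta_d^{-1}\omega_{X_2}$, one gets $g^{*}(\omega_{X_1}\wedge\omega_{X_2})=\omega_{X_1}\wedge\omega_{X_2}$. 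Hence $G=\langle g\rangle\cong\ZZ_d$ acts trivially on the canonical form, the quotient $Y:=(X_1\times X_2)/G$ has trivial dualizing sheaf, and its singularities are Gorenstein canonical. The singular locus of $Y$ is the image of $\bigcup_{k=1}^{d-1}\Fix(\eta_1^{k})\times\Fix(\eta_2^{k})$, which is smooth because $\Fix(\eta_1^{k})$ is a disjoint union of submanifolds (by $\hyperref[assump]{\textup{\textbf{A\textit{d}}}}$) and each $\Fix(\eta_2^{k})$ is a smooth divisor.

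Next I would linearise $g$ and its powers transversally to each stratum, using the weights tabulated in $\hyperref[assump]{\textup{\textbf{A\textit{d}}}}$ along the $X_1$ factor and the single normal weight forced by smoothness of $\Fix(\eta_2^{k})$ together with the non-symplectic condition along the $X_2$ factor. Because $g$ preserves $\omega_{X_1}\wedge\omega_{X_2}$, the local stabiliser of each stratum acts through $\SL$, so the transverse weights sum to $0\bmod d$ and every local model is a Gorenstein cyclic quotient singularity $\CC^{m}/\ZZ_d\times(\textup{smooth})$ with $m\le 3$. The explicit weights from $\hyperref[assump]{\textup{\textbf{A\textit{d}}}}$ identify these as the standard types -- transverse $A$-type, the $\tfrac13(1,1,1)$-type and its order-$4$ and order-$6$ analogues -- which are exactly the cyclic quotient singularities known to admit crepant resolutions. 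The point of condition $\hyperref[assump]{\textup{\textbf{A\textit{d}}}}$ is precisely that no other, non-resolvable, transverse weights can occur.

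I would then resolve $Y$ crepantly stratum by stratum, blowing up the strict transforms of the smooth $G$-invariant centres $\Fix(\eta_1^{k})\times\Fix(\eta_2^{k})$ in order of increasing dimension (equivalently, decreasing order of stabiliser). Crepancy of each blow-up follows from the weight computation, the discrepancy vanishing exactly because the weights sum to a multiple of $d$. Since the centres are $G$-invariant and the procedure is canonical, the local resolutions glue to a global crepant resolution $X\to Y$ (equivalently, one resolves $X_1\times X_2$ equivariantly and descends). I expect the case $d=6$ to be the main obstacle: there the fixed loci of $g^{2}$ (order $3$) and $g^{3}$ (order $2$) interact, and the codimension-two and codimension-three clauses of condition $\mathbf{A}6$, including the weight $(\zeta_{6}^{2},\zeta_{6}^{2},\zeta_{6},1,\ldots,1)$ along codimension-three components, are needed to guarantee that each successive centre remains smooth after the earlier blow-ups and that every blow-up stays crepant.

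Finally, $\id\times\eta_2$ commutes with $g$ and permutes the blow-up centres, so it descends to $Y$ and lifts to $X$. On the product it scales the volume form by $\zeta_d$, since $\eta_2^{*}\omega_{X_2}=\zeta_d\,\omega_{X_2}$ and $\id^{*}\omega_{X_1}=\omega_{X_1}$, so the lift is a purely non-symplectic automorphism of order $d$. To conclude I would verify $\hyperref[assump]{\textup{\textbf{A\textit{d}}}}$ for the lift by tracking its fixed locus, which consists of the proper transforms of $X_1\times\Fix(\eta_2^{k})$ together with new components supported on the exceptional divisors, computing the induced normal weights and matching them against the lists in $\hyperref[assump]{\textup{\textbf{A\textit{d}}}}$. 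Producing and checking the codimension-three component with weight $(\zeta_{6}^{2},\zeta_{6}^{2},\zeta_{6},1,\ldots,1)$ in the sextic case is the most laborious part of this final verification.
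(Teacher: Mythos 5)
The paper itself does not prove this proposition; it is imported verbatim from \cite{Burii3} and \cite{CH}, so your proposal can only be measured against the strategy of those works. Your outline does follow that strategy in broad strokes, and its bookkeeping parts are correct: $g=\eta_1\times\eta_2^{d-1}$ acts trivially on $\omega_{X_1}\wedge\omega_{X_2}$, so the quotient is Gorenstein canonical with singular locus the image of $\bigcup_k\Fix(\eta_1^k)\times\Fix(\eta_2^k)$; and $\id\times\eta_2$ commutes with $g$, descends, lifts along any resolution obtained by blowing up invariant centres, and multiplies the volume form by $\zeta_d$.

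The gap is in the middle step, and it is genuine. Your crepancy criterion is false: for a cyclic quotient $\frac{1}{r}(a_1,\ldots,a_m)$, the condition $\sum_i a_i\equiv 0\pmod{r}$ (Gorenstein) only makes discrepancies integral; it neither makes a given blow-up crepant nor guarantees that a crepant resolution exists. The discrepancy of the exceptional divisor attached to the element $\frac1r(a_1,\ldots,a_m)$ is $\frac1r\sum_i a_i-1$, which vanishes only when the age equals $1$, and in dimension $\geq 4$ Gorenstein cyclic quotient singularities generically admit \emph{no} crepant resolution at all: $\frac12(1,1,1,1)$ is Gorenstein, terminal and $\QQ$-factorial, hence resolvable neither crepantly nor smally. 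This error is compounded by your count ``$m\le 3$'', which is wrong exactly where it matters: for $d=4,6$ condition $\mathbf{A}d$ allows codimension-$3$ components of $\Fix(\eta_1)$, and adding the one transverse direction from the divisor $\Fix(\eta_2)$ (on which $\eta_2^{d-1}$ acts with weight $\zeta_d$) produces transverse \emph{fourfold} models $\frac14(1,1,1,1)$ and $\frac16(2,2,1,1)$. In transverse dimension $\le 3$ every Gorenstein abelian quotient singularity admits a crepant resolution (Roan; Bridgeland--King--Reid), so if $m\le3$ were true the proposition would be nearly automatic and the conditions $\mathbf{A}d$ pointless; the entire content of $\mathbf{A}d$ is that the only fourfold models occurring are these two exceptional ones, which happen to be crepantly resolvable ($\frac14(1,1,1,1)$ by one blow-up of the vertex, discrepancy $\frac44-1=0$; $\frac16(2,2,1,1)$ by an explicit two-step toric/iterated-quotient computation). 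Your proposal never performs these verifications and instead invokes a general principle that is false, so as written it does not establish existence of the crepant resolution; the same omission affects the $d=6$ stratification (the fixed loci of $g$, $g^2$, $g^3$ are nested, their union is not smooth, and smoothness of strict transforms plus crepancy must be rechecked after each blow-up), which you flag as ``the main obstacle'' but do not resolve.
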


\newcommand*{\LargerCdot}{\raisebox{-0.25ex}{\scalebox{1.5}{$\cdot$}}}
Let $X_{1},$ $X_{2},$ $\ldots,$ $X_{n}$ be projective manifolds of Calabi-Yau type and let $\phi_{i,d}\colon X_i\to X_i$ be a $pnpp$ automorphism of order $d,$ where $d\in\{2,3,4,6\}.$
Consider the following group 
$$G_{d,n}:=\{(m_{1},m_{2}, \ldots, m_{n})\in \ZZ_{d}^{n}\colon m_{1}+m_{2}+\ldots + m_{n}=0\}\simeq \ZZ_{d}^{n-1}$$ which acts on $X_{1}\times X_{2}\times \ldots \times X_{n}$ as $$(m_1, m_2, \ldots, m_{n})_{\LargerCdot}(x_{1}, x_{2}, \ldots, x_{n}) =\left(\phi_{1,d}^{m_1}(x_{1}), \phi_{2,d}^{m_2}(x_{2}), \ldots, \phi_{n,d}^{m_n}(x_{n})\right).$$

Using Prop. \ref{pro6} we can prove by an easy induction the following theorem:

\begin{Thm}\label{glowne}
Let $X_1,$ $X_2,$ $\ldots,$ $X_n$ be projective manifolds of Calabi-Yau type with $pnpp$ automorphisms $\phi_{i,d}\colon X_i\to X_i$ of order $d,$ where $d\in\{2,3,4,6\}.$
Assume moreover, that automorphism $\phi_{1,d}$ satisfies condition $\hyperref[assump]{\textup{\textbf{A\textit{d}}}}$ and $\Fix(\phi_{i,d}^k)$ for $k\mid d$ is a smooth for $i=2,3,\ldots, n.$ Then the quotient $$(X_{1}\times X_{2}\times \ldots \times X_{n})/G_{d,n}$$ admits a crepant resolution $\mathcal{X}_{d,n}.$ Moreover the automorphism $ \phi_{1,d}\times \id$ lifts to a $pnpp$ automorphism of $\mathcal{X}_{d,n}$. \end{Thm}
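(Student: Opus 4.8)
The plan is to argue by induction on $n$, using Proposition \ref{pro6} as the two-factor building block. The feature that makes the induction run is that Proposition \ref{pro6} does not merely produce a crepant resolution carrying \emph{some} purely non-symplectic automorphism: it returns an automorphism again satisfying condition $\mathbf{A}d$. I would therefore prove the following strengthened statement: for every $k$ with $1\le k\le n$ there is a crepant resolution $\mathcal{X}_{d,k}$ of $(X_{1}\times\cdots\times X_{k})/G_{d,k}$ carrying a purely non-symplectic automorphism $\psi_{k}$ of order $d$ which satisfies condition $\mathbf{A}d$ and which is a lift of $\id^{k-1}\times\phi_{k,d}$. The base case $k=1$ is trivial ($G_{d,1}$ is trivial, $\mathcal{X}_{d,1}=X_{1}$, $\psi_{1}=\phi_{1,d}$), and $k=2$ is exactly Proposition \ref{pro6} applied to $X_{1},X_{2}$ with $\eta_{1}=\phi_{1,d}$ and $\eta_{2}=\phi_{2,d}$.

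For the inductive step I would apply Proposition \ref{pro6} to the pair $\big(\mathcal{X}_{d,k-1},\,X_{k}\big)$, taking $\eta_{1}=\psi_{k-1}$ and $\eta_{2}=\phi_{k,d}$. This is legitimate: $\mathcal{X}_{d,k-1}$ is a smooth projective manifold of Calabi-Yau type (the resolutions being obtained by explicit blow-ups), $\psi_{k-1}$ satisfies $\mathbf{A}d$ by the inductive hypothesis, and $\Fix(\phi_{k,d}^{j})$ is a smooth divisor for every $j\mid d$ by the hypothesis of the theorem. Proposition \ref{pro6} then yields a crepant resolution $\mathcal{X}_{d,k}$ of $\big(\mathcal{X}_{d,k-1}\times X_{k}\big)/\langle\psi_{k-1}\times\phi_{k,d}^{d-1}\rangle$ together with a lift $\psi_{k}$ of $\id\times\phi_{k,d}$ that again satisfies $\mathbf{A}d$; this is precisely what the strengthened hypothesis demands, so the induction closes.

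It remains to identify the iterated quotient with the $G_{d,k}$-quotient. Writing $e_{1},\ldots,e_{k}$ for the standard generators of $\ZZ_{d}^{k}$, the quotient $\mathcal{X}_{d,k-1}$ is birational to $(X_{1}\times\cdots\times X_{k-1})/G_{d,k-1}$ and $\psi_{k-1}$ descends from $\id^{k-2}\times\phi_{k-1,d}$, so $\big(\mathcal{X}_{d,k-1}\times X_{k}\big)/\langle\psi_{k-1}\times\phi_{k,d}^{d-1}\rangle$ is birational to $(X_{1}\times\cdots\times X_{k})/H_{k}$, where $H_{k}$ is generated by the vectors $e_{i}-e_{i+1}$ for $1\le i\le k-1$. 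Since these ``simple roots'' generate the whole sum-zero subgroup, $H_{k}=G_{d,k}$, and hence $\mathcal{X}_{d,k}$ is a crepant resolution of $(X_{1}\times\cdots\times X_{k})/G_{d,k}$. Taking $k=n$ gives the resolution asserted in the theorem. Finally, because $e_{1}-e_{n}=\sum_{i=1}^{n-1}(e_{i}-e_{i+1})\in G_{d,n}$, the automorphisms $\phi_{1,d}\times\id^{n-1}$ (vector $e_{1}$) and $\id^{n-1}\times\phi_{n,d}$ (vector $e_{n}$) induce the \emph{same} automorphism on $(X_{1}\times\cdots\times X_{n})/G_{d,n}$; thus $\psi_{n}$ is exactly the lift of $\phi_{1,d}\times\id$ claimed in the statement, and it is purely non-symplectic of order $d$.

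The step I expect to require the most care is not the group bookkeeping but the verification, packaged inside Proposition \ref{pro6}, that crepancy and condition $\mathbf{A}d$ survive each application: one must check that the lift $\psi_{k}$ genuinely satisfies all the local linearisation requirements of $\mathbf{A}d$ near every component of $\Fix(\psi_{k}^{j})$ for $j\mid d$, so that the proposition can be reapplied at the next stage. Granting Proposition \ref{pro6}, this propagation is automatic, which is why the induction is ``easy''; the entire arithmetic of the fixed loci is already encoded in the two-factor statement.
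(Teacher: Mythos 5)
Your proposal is correct and matches the paper's approach exactly: the paper proves Theorem \ref{glowne} by precisely this induction on the number of factors, using Proposition \ref{pro6} (whose conclusion that the lifted automorphism again satisfies condition $\textup{\textbf{A}}d$ is what lets the induction close) as the two-factor building block. In fact the paper dismisses the argument as an ``easy induction'' without writing it out, so your filling in of the group-theoretic bookkeeping (the differences $e_{i}-e_{i+1}$ generating $G_{d,k}$, and $e_{1}\equiv e_{n}$ modulo $G_{d,n}$ identifying the lifted automorphism) is a more complete record of the same proof.
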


The manifold $\mathcal{X}_{d,n}$ has a trivial canonical bundle and hence it is a Calabi-Yau manifold if the outer Hodge numbers vanish $$H^{i,0}(\mathcal{X}_{d,n})=0,\quad \textup{$0<i<\dim \mathcal{X}_{d,n}$}.$$ As we see in Corollary \ref{zerowe} this condition is immediately satisfied when $X_{1},$ $X_{2},$ $\ldots,$ $X_{n}$ are Calabi-Yau manifolds.


In \cite{Burii3}, \cite{CH} the special case of the above theorem, when $X_{i}=E_{d}$ was considered.  

\begin{Thm}[\cite{CH}, \cite{Burii3}]\label{ch} If $d=2,3,4,6$ then there exists a crepant resolution $$ \tyldaa{E_{d}^{n}/G_{d,n}}\rightarrow E_{d}^{n}/G_{d,n}.$$ Consequently, $X_{d,n}:=\tyldaa{E_{d}^{n}/G_{d,n}}$ is an $n$-dimensional Calabi-Yau manifold. \end{Thm}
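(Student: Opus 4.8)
The plan is to obtain Theorem~\ref{ch} as the special case of Theorem~\ref{glowne} in which every factor is an elliptic curve, $X_1=\cdots=X_n=E_d$. Concretely, I would take $E_2$ to be an arbitrary elliptic curve with $\alpha_2=[-1]$; for $d=3,6$ the Eisenstein curve $E=\CC/\ZZ[\zeta_3]$; and for $d=4$ the Gaussian curve $E=\CC/\ZZ[i]$, in each case letting $\alpha_d$ be induced by multiplication by a primitive $d$-th root of unity. Each $E_d$ is a smooth projective curve with $K_{E_d}\cong\mathcal{O}_{E_d}$, hence of Calabi-Yau type, and $\omega=dz$ spans $H^{1,0}(E_d)$.

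First I would record that $\alpha_d$ is purely non-symplectic of order $d$: in the uniformizing coordinate $\alpha_d(z)=\zeta_d z$, so $\alpha_d^*\omega=\zeta_d\omega$ and $\alpha_d$ has order exactly $d$. The group $G_{d,n}$ then acts on $E_d^n$ exactly as in Theorem~\ref{glowne}, so it remains only to verify the hypotheses of that theorem.

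The substance of the verification is checking condition $\mathbf{A}d$ for the first factor together with smoothness of $\Fix(\alpha_d^k)$ for the remaining ones. Because $\dim_{\CC}E_d=1$, every $\Fix(\alpha_d^k)$ with $k\mid d$ is a finite set of points, automatically a smooth divisor, so the smoothness hypothesis on $X_2,\ldots,X_n$ is immediate. For the first factor the only real point is the normal linearization along these points: with a single coordinate the differential of $\alpha_d^k$ at a fixed point is multiplication by the corresponding power of $\zeta_d$, and the determinant constraint $\prod_i\mu_i=\zeta_d$ forces the codimension-one eigenvalue to be precisely the value prescribed by $\mathbf{A}d$, namely $\zeta_3^2$, $\zeta_4^3$ and $\zeta_6^5$. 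All clauses of $\mathbf{A}d$ referring to codimension two or three are vacuous in dimension one; for $d=6$ the auxiliary clauses on $\Fix(\alpha_6^2)=\Fix(\alpha_3)$ and $\Fix(\alpha_6^3)=\Fix([-1])$ reduce to the assertion that these are finite sets of points carrying the stated one-dimensional linearizations, which I would check directly.

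With the hypotheses in place, Theorem~\ref{glowne} produces a crepant resolution $\mathcal{X}_{d,n}\to E_d^n/G_{d,n}$, and I set $X_{d,n}:=\mathcal{X}_{d,n}=\tyldaa{E_{d}^{n}/G_{d,n}}$; its dimension is $\dim_{\CC}E_d^n=n$. To promote ``trivial canonical bundle'' to ``Calabi-Yau'' I would invoke Corollary~\ref{zerowe}: since each $E_d$ is itself a Calabi-Yau manifold, the outer Hodge numbers $H^{i,0}(\mathcal{X}_{d,n})$ vanish for $0<i<n$, which is exactly the criterion separating Calabi-Yau manifolds from the trivial-canonical-bundle output of Theorem~\ref{glowne}. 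The one piece of genuine bookkeeping, and the step I expect to cost the most care, is the case $d=6$: confirming that $\Fix(\alpha_6^2)$ and $\Fix(\alpha_6^3)$ have the prescribed linearizations and meet $\Fix(\alpha_6)$ as required by $\mathbf{A}6$; everything else is either a one-line computation on $E_d$ or an immediate consequence of $\dim E_d=1$ collapsing the higher-codimension clauses.
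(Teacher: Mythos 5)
Your skeleton is the one the paper itself intends: the paper presents Theorem~\ref{ch} as the special case $X_1=\cdots=X_n=E_d$ of Theorem~\ref{glowne} (deferring the actual proof to \cite{CH}, \cite{Burii3}), and upgrades trivial canonical bundle to Calabi--Yau via Corollary~\ref{zerowe}; your reduction, the observation that $\Fix(\alpha_d^k)$ is a finite set of points and hence a smooth divisor, the dimension count, and the appeal to Corollary~\ref{zerowe} are all fine. The gap is in the step you yourself call the substance of the verification: condition $\mathbf{A}d$ for the first factor. Your sentence asserting that ``the determinant constraint $\prod_i\mu_i=\zeta_d$ forces the codimension-one eigenvalue to be $\zeta_3^2$, $\zeta_4^3$, $\zeta_6^5$'' is self-contradictory: on a curve there is exactly one eigenvalue, so $\prod_i\mu_i=\zeta_d$ forces $\mu_1=\zeta_d$, and $\zeta_d\neq\zeta_d^{d-1}$ for $d=3,4,6$ (only $d=2$ escapes, since $-1$ is its own inverse). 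With your pinned choice $\alpha_d(z)=\zeta_d z$ --- which is what you need for $\alpha_d^*\omega=\zeta_d\omega$, i.e.\ for ``purely non-symplectic with the fixed root $\zeta_d$'' as required in Theorem~\ref{glowne} --- the literal eigenvalue prescriptions of $\mathbf{A}3$, $\mathbf{A}4$ and clauses (1) and (2) of $\mathbf{A}6$ all fail; in particular the $d=6$ check you defer would come out as $(\zeta_3)$ for $\alpha_6^2$ along $\Fix(\alpha_6^2)\setminus\Fix(\alpha_6)$, where clause (2) demands $(\zeta_3^2)$, the codimension-two alternative $(\zeta_3,\zeta_3,1,\ldots,1)$ being unavailable on a curve.

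The underlying problem is a convention clash inside the paper, which your argument elides rather than resolves: the definition of purely non-symplectic makes the determinant of the differential at a fixed point equal to $\zeta_d$, whereas every eigenvalue list in $\mathbf{A}d$ has determinant $\zeta_d^{-1}$; note that the paper's own $K3$ data (points of type $\diag(\zeta_6^2,\zeta_6^5)$ and $\diag(\zeta_6^3,\zeta_6^4)$, normal eigenvalue $\zeta_6$ along fixed curves) has determinant $\zeta_6$, and its explicit $E_6$ satisfies $\alpha_6^*\omega=\zeta_6^5\omega$, not $\zeta_6\omega$. A sound write-up must pick one consistent reading and verify against it: either interpret the matrices in $\mathbf{A}d$ as linearisations of $\eta^{-1}$ (equivalently, read $\bar\zeta_d$ in place of $\zeta_d$ there), after which your choice $\alpha_d(z)=\zeta_d z$ passes every clause, including $(\zeta_3^{-1})=(\zeta_3^2)$ for $\alpha_6^2$ and $(-1)$ for $\alpha_6^3$; or take $\alpha_d(z)=\bar\zeta_d z$ so that $\mathbf{A}d$ holds literally, at the price of the non-symplectic character becoming $\bar\zeta_d$, which must then be used as the reference root throughout the hypotheses of Theorem~\ref{glowne} and in the definition of the $G_{d,n}$-action. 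Either repair is routine, but as written your central implication is false, and since this verification is the only nontrivial content of the specialisation, the proof does not stand without it.
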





In \cite{Burii3} we have computed Hodge numbers of the resulting varieties by using the orbifold formula \ref{Formula} and a careful study of the action of $\phi_{i,d}$ on $\Fix(\phi_{i,d}^k)$. In the present paper we shall propose new approach which is much more general and allows for new applications.

One can see that if $X_{1}$ equals $S_d$ -- a $K3$ surface with purely non-symplectic automorphism of order $d,$ then the condition $\hyperref[assump]{\textup{\textbf{A\textit{d}}}}$ is automatically satisfied. Therefore as a special case of Theorem \ref{glowne} we have the following:

\begin{Thm}\label{constr} Let $E_d$ be an elliptic curve with a not period preserving automorphism of order $d$ and $S_{d}$ be a $K3$ surface with purely non-symplectic automorphism of order $d\in\{2,3,4,6\}.$ Then the quotient variety $(S_d\times E_{d}^{n-1})/G_{d,n}$ admits a crepant resolution of singularities $Y_{d,n}$. Consequently $Y_{d,n}$ is an $(n+1)$-dimensional Calabi-Yau variety.\end{Thm}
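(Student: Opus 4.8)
The plan is to derive Theorem~\ref{constr} as the special case of Theorem~\ref{glowne} obtained by taking the $n$ factors to be $X_1=S_d$ and $X_2=\cdots=X_n=E_d$. Once the hypotheses of Theorem~\ref{glowne} are checked for this choice, the crepant resolution $Y_{d,n}\to(S_d\times E_d^{n-1})/G_{d,n}$ exists, and everything else is a short bookkeeping of the dimension and the Hodge numbers.

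First I would verify the standing hypotheses of Theorem~\ref{glowne}. Both a $K3$ surface and an elliptic curve have trivial canonical bundle, so $S_d$ and $E_d$ are of Calabi--Yau type. The automorphisms are purely non-symplectic of order $d$: for $S_d$ this is the hypothesis, while for $E_d=\CC/\Lambda$ the order-$d$ automorphism fixing the origin is multiplication by a primitive $d$-th root of unity $\mu$, which pulls back the $1$-form $dz$ to $\mu\,dz$; normalising $\mu=\zeta_d$ (possible for $d\in\{2,3,4,6\}$, forcing $j=0$ for $d=3,6$ and $j=1728$ for $d=4$) guarantees that $G_{d,n}$ acts symplectically on the product. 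Finally, on each elliptic factor the locus $\Fix(\phi_{i,d}^k)$ is either all of $E_d$ (when $k=d$) or a finite set of reduced points (when $k\mid d$, $k<d$), hence in every case a smooth divisor, as required for $i\geq 2$.

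The one genuinely geometric step is to confirm that $\phi_{1,d}=\gamma_d$ satisfies condition \hyperref[assump]{\textup{\textbf{A\textit{d}}}}. Here I would invoke the classification of fixed loci of purely non-symplectic automorphisms of $K3$ surfaces (Nikulin for $d=2$, and its analogues for $d=3,4,6$), by which $\Fix(\gamma_d)$ is a disjoint union of smooth curves and isolated points. Since $\dim S_d=2$, all the codimension bounds in \hyperref[assump]{\textup{\textbf{A\textit{d}}}} and the higher-codimension clauses — in particular the codimension-$3$ linearisations and the conditions on $\Fix(\eta^2)$ and $\Fix(\eta^3)$ appearing in $\textbf{A}6$ — are vacuous, so only the linearisations along fixed curves and at isolated fixed points carry content. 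These are forced by the non-symplectic condition: the product of the two eigenvalues of $d(\gamma_d)$ at a fixed point is determined by the scalar by which $\gamma_d$ acts on $\omega_{S_d}$, a primitive $d$-th root of unity; hence along a fixed curve (where one eigenvalue is $1$) the normal eigenvalue is pinned down, and at an isolated fixed point the eigenvalue pair is constrained to exactly the cases listed in $\textbf{A}3$, $\textbf{A}4$, $\textbf{A}6$. I expect this to be the only step requiring care, since it is where the geometry of $K3$ automorphisms enters; everything else is formal.

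With the hypotheses in place, Theorem~\ref{glowne} furnishes the crepant resolution $Y_{d,n}$. Its dimension is $\dim S_d+(n-1)\dim E_d=2+(n-1)=n+1$. A crepant resolution keeps the canonical bundle trivial, and because $S_d$ and $E_d$ are genuine Calabi--Yau manifolds the intermediate Hodge numbers vanish, $H^{i,0}(Y_{d,n})=0$ for $0<i<n+1$, by Corollary~\ref{zerowe}; hence $Y_{d,n}$ is an $(n+1)$-dimensional Calabi--Yau variety.
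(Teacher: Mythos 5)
Your route is the same as the paper's: the paper derives Theorem~\ref{constr} as a special case of Theorem~\ref{glowne} with $X_1=S_d$, $X_2=\cdots=X_n=E_d$, remarking only that condition \textbf{A}\textit{d} is ``automatically satisfied'' for a $K3$ surface; your dimension count, the check on the elliptic factors, and the appeal to Corollary~\ref{zerowe} for the vanishing of the outer Hodge numbers all match that. Your eigenvalue argument for the linearisations along $\Fix(\gamma_d)$ (the determinant of the linearisation is pinned to the primitive $d$-th root by which $\gamma_d$ acts on $\omega_{S_d}$, so fixed curves and isolated fixed points can only carry the eigenvalue pairs listed in $\textbf{A}3$, $\textbf{A}4$, $\textbf{A}6$) is correct and is in fact more detail than the paper supplies.

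There is, however, one incorrect claim in your verification of $\textbf{A}6$: clauses (2) and (3), which concern $\Fix(\eta^{2})\setminus\Fix(\eta)$ and $\Fix(\eta^{3})\setminus\Fix(\eta)$, are \emph{not} vacuous when $\dim S_d=2$. Clause (3) requires $\Fix(\eta^{3})\setminus\Fix(\eta)$ to be a disjoint union of smooth \emph{divisors}, i.e.\ it forbids isolated fixed points of the involution $\gamma_6^{3}$ away from $\Fix(\gamma_6)$; on a surface this is a genuine restriction, not a consequence of any codimension bound. It does hold for $K3$ surfaces, and by exactly the argument you already use for $\Fix(\gamma_6)$: $\gamma_6^{3}$ multiplies $\omega_{S_6}$ by $\zeta_6^{3}=-1$, so an isolated fixed point would force linearisation $(-1,-1)$, whose determinant $1$ contradicts non-symplecticity; hence $\Fix(\gamma_6^{3})$ consists of smooth curves with normal eigenvalue $-1$. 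Similarly, clause (2) is settled by applying your eigenvalue argument to $\gamma_6^{2}$, which is purely non-symplectic of order $3$ (it multiplies $\omega_{S_6}$ by $\zeta_3$), so its fixed curves carry linearisation $(\zeta_3^{2},1)$ and its isolated fixed points $(\zeta_3,\zeta_3)$, as required. Only clause (4), about codimension-$3$ components, is genuinely vacuous on a surface. With clauses (2) and (3) argued rather than dismissed, your proof is complete and coincides with the paper's.
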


\section{Hodge numbers of a finite quotient}

In \cite{Burii3} we determined Hodge numbers of varieties $X_{d,n}$ based on Chen-Ruan cohomology and systematic study of orbits of the action of $G_{d,n}.$ The method is very complex and we were not able to generalize it to the case of Calabi-Yau manifolds $Y_{d,n}.$ 

\subsection{Chen-Ruan cohomology}

Let us firstly recall a definition of an age of a matrix, which is important in the further context of this paper.

\begin{Def} For $G \in \operatorname{GL}_{n}(\mathbb{C})$ of order $m,$ let $e^{2\pi i a_{1}}, e^{2\pi i a_{2}}, \ldots, e^{2\pi i a_{n}}$ be eigenvalues of $G$ where $a_{1}, a_{2}, \ldots, a_{n}\in [0,1)\cap \mathbb{Q}$. The sum $a_1 + a_2+ \ldots + a_n$ is called the \textit{age} of $G$ and is denoted by $\age(G).$
\end{Def}
\begin{Rem} The age of $G$ is an integer if and only if $\det G=1$ i.e. $G \in \operatorname{SL}_{n}(\mathbb{C})$. \end{Rem}

In \cite{CR} W. Chen and Y. Ruan introduced new cohomology theory for orbifold. Let $X$ be a projective variety and $G$ be a finite group which acts on $X.$  

\begin{Def}\label{Formula} For a variety $X/G$ define the \textit{Chen-Ruan cohomology} by \begin{equation}\label{orbfor} H_{\textrm{orb}}^{i,j}(X/G):=\bigoplus_{[g]\in\textrm{Conj}(G)}\left(\bigoplus_{U\in \Lambda(g)} H^{i- \age(g),\; j-\age(g)}(U)\right)^{\textup{C}(g)}, \end{equation} where $\textrm{Conj}(G)$ is the set of conjugacy classes of $G$ (we choose a representative $g$ of each conjugacy class), $\textup{C}(g)$ is the centralizer of $g$, $\Lambda(g)$ denotes the set of irreducible connected components of the set fixed by $g\in G$ and $\age(g)$ is the age of the matrix of linearised action of $g$ near a point of $U.$ \par The dimension of $ H_{\textrm{orb}}^{i,j}(X/G)$ will be denoted by $h_{\text{orb}}^{i,j}(X/G).$\end{Def}

\begin{Rem} The definition makes sense since age is locally constant along each component of $\Fix(g).$ The components of $\Fix(g)$ are not necessarily invariant under the action of $\textup{C}(g)$, so in \ref{orbfor} we need to consider the action of $\textup{C}(g)$ on the inner direct sum (not on each summand separately).   \end{Rem}

An important feature of Chen-Ruan cohomology is the possibility of computing Hodge numbers of a crepant resolution of singularities of a quotient variety, without referring to an explicit construction of such a resolution. 

\begin{Thm}[\cite{TY}]{}\label{TY3}
	Let $G$ be a finite group acting on an algebraic smooth variety $X$. If there exists a crepant resolution $\tylda{X/G}$ of the variety $X/G,$ then the following equality holds $$h^{i,j}(\tylda{X/G})=h^{i,j}_{\orb}\left(X/G\right).$$
\end{Thm}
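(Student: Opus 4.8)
The plan is to prove the equality by comparing both sides with a common intermediate invariant, the \emph{stringy $E$-function} of the quotient, and to carry out the whole comparison at the level of Hodge--Deligne polynomials rather than mere Euler characteristics. For a variety $Y$ write $E(Y;u,v)=\sum_{p,q}\big(\sum_{k}(-1)^{k}h^{p,q}(H^{k}_{c}(Y,\CC))\big)u^{p}v^{q}$ for its Hodge--Deligne polynomial; when $Y$ is smooth and projective this specialises (sign conventions aside) to $\sum_{p,q}h^{p,q}(Y)u^{p}v^{q}$, so that knowing $E(\tylda{X/G};u,v)$ is equivalent to knowing all the Hodge numbers $h^{i,j}(\tylda{X/G})$ that appear on the left of the theorem.

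First I would read off the orbifold side directly from Definition \ref{Formula}. By \eqref{orbfor} the generating polynomial $E_{\orb}(X/G;u,v):=\sum_{i,j}h^{i,j}_{\orb}(X/G)\,u^{i}v^{j}$ equals
\begin{equation*}
E_{\orb}(X/G;u,v)=\sum_{[g]\in\mathrm{Conj}(G)}(uv)^{\age(g)}\,E\big(\Fix(g);u,v\big)^{\mathrm{C}(g)},
\end{equation*}
where $E(\Fix(g))^{\mathrm{C}(g)}$ abbreviates the Hodge--Deligne polynomial of the $\mathrm{C}(g)$-invariant part of $\bigoplus_{U\in\Lambda(g)}H^{\bullet}(U)$, and the age shift becomes multiplication by $(uv)^{\age(g)}$ because the bidegree is shifted by $(\age(g),\age(g))$. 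Thus the comparison between the left-hand side and the orbifold side is purely formal once the theorem is recast as the identity $E(\tylda{X/G})=E_{\orb}(X/G)$.

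The substance is this identity, which I would establish through motivic integration and the McKay correspondence. On one hand, applying the change-of-variables formula to the crepant resolution $\tylda{X/G}\to X/G$ shows that the stringy $E$-function $E_{\mathrm{st}}(X/G)$ --- obtained by integrating the motivic measure twisted by the discrepancy divisor over the arc space of $X/G$ --- collapses to the ordinary polynomial $E(\tylda{X/G})$, precisely because crepancy forces every discrepancy coefficient to vanish. On the other hand, computing the same integral upstairs on $X$ via $G$-equivariant (twisted) arcs and pushing forward to the quotient produces a sum over conjugacy classes in which each class $[g]$ contributes the Hodge--Deligne polynomial of $\Fix(g)/\mathrm{C}(g)$ weighted by $(uv)^{\age(g)}$; this is exactly $E_{\orb}(X/G)$. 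Chaining the two computations gives $E_{\orb}(X/G)=E_{\mathrm{st}}(X/G)=E(\tylda{X/G})$, and passing to coefficients yields the asserted equality of Hodge numbers.

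The main obstacle is the second motivic computation: matching the contribution of twisted arcs through the fixed loci with the age-shifted conjugacy-class decomposition, and in particular tracking the $\mathrm{C}(g)$-action on the generally non-invariant components $\Lambda(g)$ so that the invariants are taken on the inner sum, exactly as flagged in the Remark following Definition \ref{Formula}. A secondary but essential point is that the argument must be run at the level of Hodge--Deligne polynomials, not Euler characteristics: this requires compatibility of motivic integration with the Hodge realisation, which in turn relies on the smoothness of the fixed loci $\Fix(\eta^{k})$ and on the explicit linearisations furnished by conditions $\hyperref[assump]{\textup{\textbf{A\textit{d}}}}$, guaranteeing that each sector is a sum of pure Hodge structures shifted by $(\age(g),\age(g))$.
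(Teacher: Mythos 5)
You should first be aware that the paper contains no proof of this statement at all: it is quoted from Yasuda \cite{TY} and used as a black box (together with Definition \ref{Formula}) to compute Hodge numbers of the crepant resolutions constructed elsewhere in the paper. So the only meaningful comparison is with the proof in the cited literature, and there your sketch follows essentially the right route: Batyrev's theorem that a crepant resolution of a log-terminal variety has $E$-polynomial equal to the stringy $E$-function, chained with the Denef--Loeser/Yasuda computation (via twisted arcs/jets) identifying the stringy $E$-function of $X/G$ with the age-shifted sum over conjugacy classes, i.e.\ the orbifold $E$-polynomial. Your reduction of the Hodge-number statement to an identity of Hodge--Deligne polynomials, and the two-sided evaluation of the same motivic integral, is exactly the architecture of the quoted theorem's proof.

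Three corrections are needed, however. (1) The age is only locally constant on $\Fix(g)$ and genuinely varies between components (this is precisely why Definition \ref{Formula} sums over $U\in\Lambda(g)$ and why the paper's tables carry the index $\lambda_i$), so in your first display the factor $(uv)^{\age(g)}$ cannot be pulled out in front of the $\mathrm{C}(g)$-invariants; it must be kept inside the sum over components, with the invariants still taken on the total inner sum. (2) Your closing paragraph asserts that the argument relies on smoothness of the fixed loci $\Fix(\eta^k)$ and on the linearisations furnished by the conditions $\mathbf{A}d$; this confuses the hypotheses of the present theorem with those of Proposition \ref{pro6} and Theorem \ref{glowne}. Yasuda's theorem needs none of this: fixed loci of finite-order automorphisms of a smooth variety are automatically smooth in characteristic zero, and the conditions $\mathbf{A}d$ enter the paper only to guarantee the \emph{existence} of a crepant resolution, which in this theorem is an assumption. (3) Recovering the individual Hodge numbers $h^{i,j}$ from $E$-polynomials requires $\tylda{X/G}$ to be smooth and \emph{proper}; for open varieties the Hodge--Deligne polynomial records only alternating sums of Hodge numbers of compactly supported cohomology. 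Properness is implicit in the paper (which applies the theorem to projective $X$ only) but must be stated in your argument, since the theorem as written allows an arbitrary smooth algebraic $X$.
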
 In particular Hodge numbers of any crepant resolution do not depand on the specific resolution. For a systematic exposition of the orbifold Chen-Ruan cohomology see \cite{Leida}.

\subsection{Hodge numbers of $\mathcal{X}_{d,n}$}
\label{metoda}

Let $X_{i}$ be a variety of Calabi-Yau type of dimension $n_{i}$ with $pnpp$ automorphisms $\phi_{i,d}\colon X_i\to X_i$ of order $d$. Suppose that there exists a crepant resolution $\mathcal{X}_{d,n}$ of the quotient variety $$(X_{1}\times X_{2}\times \ldots \times X_{n})/\ZZ_{d}^{n-1},$$
then $\mathcal{X}_{d,n}$ is of Calabi-Yau type. The aim of this section is to give a formula for the Hodge number of $\mathcal{X}_{d,n}$ using Chen-Ruan cohomology. We shall use the following obvious lemma:

\begin{Lem}\label{lemaciko} Let $V_{1},$ $V_{2},$ $\ldots,$ $V_{n}$ be vector spaces over field of $k$ containing $\zeta_{d}$. Assume that $\alpha_{i}\in \textup{End}(V_{i})$ for $i=1,2,\ldots n$ is an automorphism of order $d.$ Then the fixed locus of $G_{d,n}$ acting on $V_{1}\times V_{2}\times \ldots \times V_{n}$ by 
	$$\alpha_{g}\colon G_{d,n}\ni g\mapsto \alpha_{1}^{g_1}\times \ldots \times \alpha_{n}^{g_n}\in \textup{End}(V_{1}\times V_{2}\times \ldots \times V_{n})$$ equals
	$$\bigoplus_{m=0}^{d-1}\left(V_{1}\right)_{\zeta_{d}^m}\otimes \ldots \otimes \left(V_{n}\right)_{\zeta_{d}^m},$$ where $\left(V_{i}\right)_{\zeta_{d}^m}$ denotes $m$-th eigenspace of $\ZZ_{d}$ action on $V_{i},$ for $i\in\{1,2,\ldots, n\}$ and $m\in \{0,1,\ldots, d-1\}.$ \end{Lem}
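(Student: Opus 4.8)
The plan is to compute the fixed locus of the $G_{d,n}$-action directly by unwinding the definitions. A vector $(v_1, \ldots, v_n) \in V_1 \times \cdots \times V_n$ lies in $\Fix(\alpha_g)$ for a given $g = (g_1, \ldots, g_n) \in G_{d,n}$ precisely when $\alpha_i^{g_i}(v_i) = v_i$ for every $i$; the point is then to intersect these conditions over \emph{all} $g \in G_{d,n}$. First I would decompose each $V_i$ into eigenspaces of $\alpha_i$: since $\alpha_i$ has order $d$ and the base field $k$ contains $\zeta_d$, it is diagonalisable and we may write
\begin{equation}
V_i = \bigoplus_{m=0}^{d-1} (V_i)_{\zeta_d^m},
\end{equation}
where $\alpha_i$ acts on $(V_i)_{\zeta_d^m}$ by the scalar $\zeta_d^m$. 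Taking the tensor-product decomposition of $V_1 \otimes \cdots \otimes V_n$ indexed by tuples $(m_1, \ldots, m_n) \in \ZZ_d^n$, the generator $g = (g_1, \ldots, g_n)$ acts on the summand $(V_1)_{\zeta_d^{m_1}} \otimes \cdots \otimes (V_n)_{\zeta_d^{m_n}}$ by the scalar $\zeta_d^{\,g_1 m_1 + \cdots + g_n m_n}$.

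With this in hand the computation becomes purely character-theoretic. A simple tensor lies in the fixed locus of all of $G_{d,n}$ exactly when the exponent $g_1 m_1 + \cdots + g_n m_n \equiv 0 \pmod d$ for every $(g_1, \ldots, g_n) \in G_{d,n}$. Since $G_{d,n}$ consists of all tuples with $\sum_i g_i \equiv 0$, I would argue that this orthogonality condition forces the tuple $(m_1, \ldots, m_n)$ to be constant, i.e. $m_1 = m_2 = \cdots = m_n =: m$. Indeed, the differences $e_i - e_j$ (the tuple with $+1$ in slot $i$, $-1$ in slot $j$, zero elsewhere) all lie in $G_{d,n}$ and generate it; applying the fixed-point condition to $e_i - e_j$ yields $m_i - m_j \equiv 0 \pmod d$, and conversely any constant tuple pairs to zero with every element of $G_{d,n}$ because its coordinates sum to zero. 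Collecting the surviving summands over the common value $m \in \{0, 1, \ldots, d-1\}$ gives exactly
\begin{equation}
\bigoplus_{m=0}^{d-1} (V_1)_{\zeta_d^m} \otimes \cdots \otimes (V_n)_{\zeta_d^m},
\end{equation}
which is the claimed description.

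The only genuinely substantive point — and the step I would present most carefully — is the linear-algebra lemma that a character $\chi_{(m_1,\ldots,m_n)}\colon g \mapsto \zeta_d^{\sum g_i m_i}$ is trivial on the subgroup $G_{d,n}$ if and only if $(m_1, \ldots, m_n)$ is constant modulo $d$. This is precisely the statement that the annihilator of $G_{d,n} = \ker(\textstyle\sum)$ under the perfect pairing $\ZZ_d^n \times \ZZ_d^n \to \ZZ_d$ is the diagonal subgroup generated by $(1,1,\ldots,1)$, which follows since $G_{d,n}$ has index $d$ and the diagonal has order $d$, so each is the full annihilator of the other. Everything else is routine bookkeeping with the eigenspace decomposition. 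I do not expect any real obstacle here, which is consistent with the authors' description of this as an ``obvious lemma''; the main care is simply to phrase the constant-tuple characterisation cleanly rather than treat each generator $e_i - e_j$ ad hoc.
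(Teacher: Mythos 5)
Your proof is correct and takes essentially the same route as the paper's: both decompose into eigenspaces of the $\alpha_i$ and test the fixed-point condition against the elements with $1$ in slot $i$ and $d-1\equiv -1$ in slot $j$ (your $e_i-e_j$ are exactly the paper's $g_{ij}$), concluding that all $m_i$ must agree. Your character-theoretic/annihilator framing, and the explicit converse that constant tuples are fixed since $\sum_i g_i\equiv 0$, are just cleaner packaging of the same computation.
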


\begin{proof} Since the action of $G_{d,n}$ is diagonalizable we can consider only the tensor product of eigenvectors $v_{1}\otimes \ldots \otimes v_{n}$ where $\alpha_{i}(v_{i})=\zeta_{d}^{m_i}v_{i}.$ If $m_{i}\neq m_{j}$ for some $1\leq i<j\leq n$ then consider an element $$g_{ij}=\big(0,\ldots, \underbrace{1}_{i-\textup{th place}}, \ldots \underbrace{d-1}_{j-\textup{th place}},\ldots 1\big).$$ Since we have $$\alpha_{g_{ij}}(v_{1}\otimes \ldots \otimes v_{n})=\zeta_{d}^{m_i+(d-1)m_{j}}v_{1}\otimes \ldots \otimes v_{n}$$ the vector $v_{1}\otimes \ldots \otimes v_{n}$ is fixed by $\alpha_{g_{ij}}$ iff $d\mid m_i+(d-1)m_{j}$ which is equivalent to $m_{i}=m_{j}.$\end{proof}

Observe first that for $i\in\{1,2,\ldots, n\}$, $0\leq m_{i}\leq d-1$, the automorphism $\phi_{i,d}^{m_{i}}$ has local diagonalization near a point of $\Fix\left(\phi_{i,d}^{m_{i}}\right)$ of the following form $$\left(\zeta_{d}^{\alpha_{1}m_i}, \ldots, \zeta_{d}^{\alpha_{n_i}m_i}\right),$$ where $\zeta_{d}^{\alpha_{1}}\cdot\ldots \cdot \zeta_{d}^{\alpha_{n_{i}}} =\zeta_{d}$. Consequently $$\age\left(\phi_{i,d}^{m_{i}}\right)=\frac{m_{i}}{d}+\lambda_i,$$ where $\lambda_i$ is a non-negative integer, $\lambda_i<n_i$, moreover $\lambda_i$ is constant along every component of $\Fix(\alpha_{i}).$ 

\begin{Def} Define $$X_{i,m_i,\lambda_i}=\left\{x\in \Fix\left(\phi_{i,d}^{m_i}\right)\colon \age\left(\phi_{i,d}^{m_i}\right)=\frac{m_i}{d}+\lambda_i\;\; \textup{near } x\right\}$$ and
	
	$$F_{X_{i},m_i,j}(U,V)=\sum_{\lambda_{i}\geq 0}\sum_{0\leq p,q\leq \dim X_{i}}\dim_{\mathbb{C}}\left(H^{p,q}\left(X_{i, m_i,\lambda_{i}}\right)_{\zeta_{d}^j}\right)U^{p+\lambda_{i}}V^{q+\lambda_{i}},$$
where $H^{p,q}\left(X_{i, m_i,\lambda_{i}}\right)_{\zeta_{d}^j}$ denotes $\zeta_{d}^j$-eigenspace of the action of $\phi_{i,d}^{m_{i}}$ on the space $H^{p,q}\left(X_{i, m_i,\lambda_{i}}\right).$\end{Def}

We shall compute polynomials $F_{X_{i},m_i,j}$ only in the case of $X_{i}=E_{d}$ or $X_{i}=S_{d},$ then we shall denote them simply by $F_{E_{d},m_i,j}$ or $F_{S_{d},m_i,j}$.   

\begin{Thm}\label{wzorek} Assume that $X_{i}$ is a variety of Calabi-Yau type of dimension $n_{i}$ with $pnpp$ automorphisms  $\phi_{i,d}\colon X_i\to X_i$ of order $d$, $i=1,2,\ldots, n.$ Suppose that there exists a crepant resolution $\mathcal{X}_{d,n}$ of the quotient variety $$(X_{1}\times X_{2}\times \ldots \times X_{n})/G_{d,n}.$$
  Then \begin{equation} h^{p,q}\left(\mathcal{X}_{d,n}\right)= \sum_{j=0}^{d-1}\prod_{i=1}^{n}\Bigg(\sum_{m=0}^{d-1}\sqrt[d]{(UV)^{m}}\cdot F_{X_{i},m,j}(U,V) \Bigg)[U^{p}V^{q}], \end{equation} where $\mathcal{P}[U^{p}V^{q}]$ denotes the coefficient of a (Puiseaux) polynomial $\mathcal{P}$ in front of $U^{p}V^{q}.$ 
\end{Thm}

\begin{proof} By the Yasuda's Thm. \ref{TY3} $$H^{p,q}\left(\mathcal{X}_{d,n}\right)=H^{p,q}_{\textup{orb}}\left((X_{1}\times X_{2}\times \ldots \times X_{n})/\ZZ_{d}^{n-1}\right).$$ Now by the K{\"u}nneth formula for Hodge groups it follows that
	\bgroup\allowdisplaybreaks
	\begin{align*} H^{p,q}\left(\mathcal{X}_{d,n}\right)=\bigoplus_{m\in G_{n,d}}\Bigg(\bigoplus_{\mathfrak{\lambda}\in \NN^{n}}\bigoplus_{\substack{\mathfrak{p},\mathfrak{q}\in \NN^{n} \\ |\mathfrak{p}|=p-\frac{1}{d}|m|-|\mathfrak{\lambda}| \\ |\mathfrak{q}|=q-\frac{1}{d}|m|-|\mathfrak{\lambda}|}}H^{p_{1},q_{1}}\left(X_{1,m_{1}, \lambda_{1}}\right)\otimes \ldots \otimes H^{p_{n},q_{n}}\left(X_{n,m_{n}, \lambda_{n}}\right)\Bigg)^{G_{n,d}},
	\end{align*} 
	\egroup
	where $\mathfrak{\lambda}=(\lambda_{1}, \lambda_{2}, \ldots, \lambda_{n}),$ $\mathfrak{p}=(p_{1}, p_{2}, \ldots, p_{n}),$ $\mathfrak{q}=(q_{1}, q_{2}, \ldots, q_{n})$ and  $|\mathfrak{\lambda}|=\lambda_{1}+\lambda_{2}+ \ldots+ \lambda_{n}$, $|\mathfrak{p}|=p_{1}+p_{2}+ \ldots+ p_{n}$, $|\mathfrak{q}|=q_{1}+q_{2}+ \ldots+ q_{n}$. 
	
	Since 
	$\phi_{d,n}\left(X_{i,m_i,\lambda_i}\right)=X_{i,m_i,\lambda_i}$, the group $G_{n,d}$ acts separately on each summand of the inner sum, hence by the lemma \ref{lemaciko} we get 
	\bgroup\allowdisplaybreaks
	\begin{align*}
		H^{p,q}\left(\mathcal{X}_{d,n}\right)=\bigoplus_{m\in G_{n,d}}\bigoplus_{\mathfrak{\lambda}\in \NN^{n}}\bigoplus_{\substack{\mathfrak{p},\mathfrak{q}\in \NN^{n} \\ |\mathfrak{p}|=p-\frac{1}{d}|m|-|\mathfrak{\lambda}| \\ |\mathfrak{q}|=q-\frac{1}{d}|m|-|\mathfrak{\lambda}|}}\bigoplus_{j=0}^{d} H^{p_{1},q_{1}}(X_{1,m_{1}, \lambda_{1}})_{\zeta_{d}^j}\otimes \ldots \otimes H^{p_{n},q_{n}}(X_{n,m_{n}, \lambda_{n}})_{\zeta_{d}^j}.
	\end{align*}
	\egroup
	Taking dimensions and forming the generating function we get 
	\bgroup\allowdisplaybreaks
	\begin{align*}
		&\sum_{0\leq p,q\leq \dim \mathcal{X}_{d,n}}h_{\orb}^{p,q}\left(\mathcal{X}_{d,n}\right)U^pV^q=\sum_{0\leq p,q\leq \dim \mathcal{X}_{d,n}}\sum_{m\in G_{n,d}}\sum_{\mathfrak{\lambda}\in \NN^{n}}\sum_{j=0}^{d}\sum_{\substack{\mathfrak{p},\mathfrak{q}\in \NN^{n} \\|\mathfrak{p}|=p\\ |\mathfrak{q}|=q}}h^{p_{1}-\lambda_{1},q_{1}-\lambda_{1}}\left(X_{1,m_{1}, \lambda_{1}}\right)_{\zeta_d^j}\cdot \ldots \cdot \\&\cdot h^{p_{n}-\lambda_{n},q_{n}-\lambda_{n}}\left(X_{n,m_{n}, \lambda_{n}}\right)_{\zeta_d^j} U^{p}V^{q}\cdot (UV)^{\frac{m_1+\ldots + m_n}{d}}=\\&=\sum_{m\in G_{n,d}}\sum_{\mathfrak{\lambda}\in \NN^{n}}\sum_{j=0}^{d}\left(\sum_{p_{1},q_{1}\geq 0}h^{p_{1}-\lambda_{1},q_{1}-\lambda_{1}}\left(X_{1,m_{1}, \lambda_{1}}\right)_{\zeta_{d}^{j}}U^{p_{1}}V^{q_{1}}\cdot(UV)^{\frac{m_1}{d}}\right)\times \ldots \times \\&\times \left(\sum_{p_{n},q_{n}\geq 0}h^{p_{n}-\lambda_{n},q_{n}-\lambda_{n}}\left(X_{n,m_{n}, \lambda_{n}}\right)_{\zeta_{d}^{j}}U^{p_{n}}V^{q_{n}}\cdot(UV)^{\frac{m_n}{d}}\right)=\\&=
		\sum_{m\in G_{n,d}}\sum_{\mathfrak{\lambda}\in \NN^{n}}\sum_{j=0}^{d-1}\left(\sum_{p_{1},q_{1}\geq 0}h^{p_{1},q_{1}}\left(X_{1,m_{1}, \lambda_{1}}\right)_{\zeta_{d}^{j}}U^{p_{1}}V^{q_{1}}\right)(UV)^{\lambda_1+\frac{m_1}{d}}\cdot\ldots\cdot \\ &\cdot \left(\sum_{p_{n},q_{n}\geq 0}h^{p_{n},q_{n}}\left(X_{n,m_{n}, \lambda_{n}}\right)_{\zeta_{d}^{j}}U^{p_{n}}V^{q_{n}}\right)(UV)^{\lambda_n+\frac{m_n}{d}}.
	\end{align*}
	\egroup
	Enlarging the exterior sum $\displaystyle \sum_{m\in G_{n,d}}(\ldots)$ to $\displaystyle \sum_{m\in \ZZ_{d}^{n}}(\ldots)$ we introduce only terms with appropriate fractional powers of $U$ and $V$, hence $h^{p,q}\left(\mathcal{X}_{d,n}\right)$ is the coefficient in $U^{p}V^{q}$ of the following Puiseux polynomial:
	\begin{align*}&\sum_{m\in \ZZ_{d}^{n}}\sum_{\pmb{\lambda}\in \NN^{n}}\sum_{j=0}^{d-1}\left(\sum_{p_{1},q_{1}\geq 0}h^{p_{1},q_{1}}\left(X_{1,m_{1}, \lambda_{1}}\right)_{\zeta_{d}^{j}}U^{p_{1}}V^{q_{1}}\right)(UV)^{\lambda_1+\frac{m_1}{d}}\cdot\ldots\cdot \\ &\cdot \left(\sum_{p_{n},q_{n}\geq 0}h^{p_{n},q_{n}}\left(X_{n,m_{n}, \lambda_{n}}\right)_{\zeta_{d}^{j}}U^{p_{n}}V^{q_{n}}\right)(UV)^{\lambda_n+\frac{m_n}{d}}=
		\sum_{j=0}^{d-1}\prod_{i=1}^{n}\sum_{m=0}^{d-1}\sqrt[d]{(UV)^{m}}\cdot F_{X_{i},m_i,j}(U,V). \end{align*}

\end{proof}

From the above Thm. \ref{wzorek} it is particularly easy to compute the outer Hodge numbers $$H^{i,0}(\mathcal{X}_{d,n})=0,\quad \textup{$0<i<\dim \mathcal{X}_{d,n}$}.$$  
By Thm. 2.4 this implies:

\begin{Cor}\label{zerowe} If $X_{1},$ $X_{2},$ $\ldots,$ $X_{n}$ are Calabi-Yau manifolds then $\mathcal{X}_{d,n}$ is also a Calabi-Yau manifold. \end{Cor}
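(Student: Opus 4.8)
The plan is to leverage the fact---already recorded just after Theorem \ref{glowne}---that $\mathcal{X}_{d,n}$ has a trivial canonical bundle (indeed $G_{d,n}$ acts symplectically on the product, preserving $\omega_{X_1}\wedge\cdots\wedge\omega_{X_n}$ since any element has $\sum_i m_i\equiv 0$, so the quotient is Gorenstein with trivial dualizing sheaf and the crepant resolution inherits this). Hence $\mathcal{X}_{d,n}$ is already of Calabi-Yau type, and to promote it to a genuine Calabi-Yau manifold it suffices to check the vanishing of the outer Hodge numbers $H^{i,0}(\mathcal{X}_{d,n})=0$ for $0<i<N$, where $N=\dim\mathcal{X}_{d,n}=n_1+\cdots+n_n$.

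First I would reduce to the untwisted sector of the orbifold cohomology. In formula \ref{orbfor} every $g\neq e$ has strictly positive age along its fixed locus, so a contribution to bidegree $(i,0)$ would require a class in $H^{i-\age(g),\,-\age(g)}$, which is impossible. Thus only the identity class survives, and combining this with the K{\"u}nneth decomposition and Lemma \ref{lemaciko} one reaches exactly the identity displayed before the corollary, namely
$$H^{i,0}(\mathcal{X}_{d,n})=\bigoplus_{i_1+\cdots+i_n=i}\left(H^{i_1,0}(X_1)^{\ZZ_d}\otimes\cdots\otimes H^{i_n,0}(X_n)^{\ZZ_d}\right).$$

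Next I would feed in the Calabi-Yau hypothesis on each factor. For a Calabi-Yau manifold $X_k$ of dimension $n_k$ one has $H^{i_k,0}(X_k)=0$ whenever $0<i_k<n_k$, so a summand above can be nonzero only when each $i_k\in\{0,n_k\}$; writing $S=\{k:i_k=n_k\}$, the index is $i=\sum_{k\in S}n_k$. On the one-dimensional space $H^{n_k,0}(X_k)=\CC\,\omega_{X_k}$ the generator $\phi_{k,d}$ of $\ZZ_d$ acts by $\zeta_d\neq 1$, so $H^{n_k,0}(X_k)^{\ZZ_d}=0$, while on $H^{0,0}(X_k)=\CC$ it acts trivially. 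Therefore $H^{i_k,0}(X_k)^{\ZZ_d}\neq 0$ only for $i_k=0$, which forces $S=\emptyset$ and $i=0$. Consequently every summand with $i>0$ vanishes, giving $H^{i,0}(\mathcal{X}_{d,n})=0$ for all $0<i<N$, and the corollary follows.

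The one point I expect to require care is the passage from $G_{d,n}$-invariants to $\ZZ_d^n$-invariants used in the cited identity: it is legitimate precisely in the range $0<i<N$. The $\ZZ_d^n$-character on the summand indexed by $S$ is $m\mapsto\zeta_d^{\sum_{k\in S}m_k}$, which restricts trivially to $G_{d,n}=\{\sum m_k\equiv 0\}$ only for $S=\emptyset$ or $S=\{1,\dots,n\}$. The borderline $S=\{1,\dots,n\}$, i.e. $i=N$, is exactly where $G_{d,n}$-invariance survives but $\ZZ_d^n$-invariance does not; there it recovers the holomorphic volume form $H^{N,0}(\mathcal{X}_{d,n})=\CC$ rather than a middle-degree class, consistent with triviality of the canonical bundle. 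Getting this character bookkeeping straight is the main obstacle; once it is settled, the middle-degree vanishing is immediate.
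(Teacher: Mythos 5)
Your proposal is correct and follows essentially the same route as the paper: reduction to the untwisted sector of the orbifold cohomology, K\"unneth, passage from $G_{d,n}$- to $\ZZ_{d}^{n}$-invariants, and then the vanishing forced by the Calabi-Yau hypothesis together with the purely non-symplectic action on $H^{n_k,0}(X_k)$. Your character bookkeeping in the last paragraph merely makes explicit (and correctly delimits to the range $0<i<\dim\mathcal{X}_{d,n}$) the invariants step that the paper's displayed chain of equalities states without comment.
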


\subsection{Hodge numbers of $X_{d,n}$}

In this section we apply theorem \ref{wzorek} to the case $X_{i}=E_{d},$ reproving formulas from \cite{Burii3}. The proof given here is much shorter and cleaner. 

The tables below give polynomials $F_{E_d,k,j}(U,V)$ for $d=2,3,4,6.$ 
\begin{figure}[H]
	\centering 
	\begin{minipage}[b][5.5cm][b]{.40\textwidth}
		\centering
		\begin{table}[H]
\begin{center}
	\begin{varwidth}{\textheight}	\rowcolors{2}{gray!25}{white}
		\resizebox{0.3\textheight}{!}{
			\begin{tabular}{c||c||c||c||c||c||c}
				\hhline{~|~|~|~|~|~|~}
				\backslashbox{$k$}{$j$}  &
				\cellcolor{gray!30}0 & \cellcolor{gray!30}1 & \cellcolor{gray!30}2 &\cellcolor{gray!30} 3 & \cellcolor{gray!30}4 & \cellcolor{gray!30}5\\
				\hhline{=::=::=::=::=::=::=:}
				
				\setlength{\fboxrule}{5pt}\fcolorbox{white}{white}{0} & \setlength{\fboxrule}{5pt}\fcolorbox{white}{white}{$1+UV$} & \setlength{\fboxrule}{5pt}\fcolorbox{white}{white}{$U$}  & \setlength{\fboxrule}{5pt}\fcolorbox{white}{white}{0} &  \setlength{\fboxrule}{5pt}\fcolorbox{white}{white}{0} & \setlength{\fboxrule}{5pt}\fcolorbox{white}{white}{0} & \setlength{\fboxrule}{5pt}\fcolorbox{white}{white}{$V$}\\
				
				\setlength{\fboxrule}{5pt}\fcolorbox{gray!25}{gray!25}{1} & \setlength{\fboxrule}{5pt}\fcolorbox{gray!25}{gray!25}{1} & \setlength{\fboxrule}{5pt}\fcolorbox{gray!25}{gray!25}{0} & \setlength{\fboxrule}{5pt}\fcolorbox{gray!25}{gray!25}{0} & \setlength{\fboxrule}{5pt}\fcolorbox{gray!25}{gray!25}{0} & \setlength{\fboxrule}{5pt}\fcolorbox{gray!25}{gray!25}{0} & \setlength{\fboxrule}{5pt}\fcolorbox{gray!25}{gray!25}{0}\\
				
				\setlength{\fboxrule}{5pt}\fcolorbox{white}{white}{2} & \setlength{\fboxrule}{5pt}\fcolorbox{white}{white}{2} & \setlength{\fboxrule}{5pt}\fcolorbox{white}{white}{0}  & \setlength{\fboxrule}{5pt}\fcolorbox{white}{white}{0} &  \setlength{\fboxrule}{5pt}\fcolorbox{white}{white}{1} & \setlength{\fboxrule}{5pt}\fcolorbox{white}{white}{0} & \setlength{\fboxrule}{5pt}\fcolorbox{white}{white}{0}\\
				
				\setlength{\fboxrule}{5pt}\fcolorbox{gray!25}{gray!25}{3} & \setlength{\fboxrule}{5pt}\fcolorbox{gray!25}{gray!25}{2} & \setlength{\fboxrule}{5pt}\fcolorbox{gray!25}{gray!25}{0} & \setlength{\fboxrule}{5pt}\fcolorbox{gray!25}{gray!25}{1} & \setlength{\fboxrule}{5pt}\fcolorbox{gray!25}{gray!25}{0} & \setlength{\fboxrule}{5pt}\fcolorbox{gray!25}{gray!25}{1} & \setlength{\fboxrule}{5pt}\fcolorbox{gray!25}{gray!25}{0}\\
				
				\setlength{\fboxrule}{5pt}\fcolorbox{white}{white}{4} & \setlength{\fboxrule}{5pt}\fcolorbox{white}{white}{2} & \setlength{\fboxrule}{5pt}\fcolorbox{white}{white}{0}  & \setlength{\fboxrule}{5pt}\fcolorbox{white}{white}{0} &  \setlength{\fboxrule}{5pt}\fcolorbox{white}{white}{1} & \setlength{\fboxrule}{5pt}\fcolorbox{white}{white}{0} & \setlength{\fboxrule}{5pt}\fcolorbox{white}{white}{0}\\
				
				\setlength{\fboxrule}{5pt}\fcolorbox{gray!25}{gray!25}{5} & \setlength{\fboxrule}{5pt}\fcolorbox{gray!25}{gray!25}{1} & \setlength{\fboxrule}{5pt}\fcolorbox{gray!25}{gray!25}{0} & \setlength{\fboxrule}{5pt}\fcolorbox{gray!25}{gray!25}{0} & \setlength{\fboxrule}{5pt}\fcolorbox{gray!25}{gray!25}{0} & \setlength{\fboxrule}{5pt}\fcolorbox{gray!25}{gray!25}{0} & \setlength{\fboxrule}{5pt}\fcolorbox{gray!25}{gray!25}{0}\\
			\end{tabular}
		}
	\end{varwidth}
\vspace{3mm}
	\captionof{table}{$F_{E_6,k,j}(U,V)$}\label{Order6E}
\end{center}
\end{table}
	\end{minipage}
	\quad \quad \quad \quad \quad
	\begin{minipage}[b][5cm][b]{.40\textwidth}
		\centering
		\begin{table}[H]
	\begin{center}
		\begin{varwidth}{\textheight}	\rowcolors{2}{gray!25}{white}
			\resizebox{0.3\textheight}{!}{
				\begin{tabular}{c||c||c||c||c}
					\hhline{~|~|~|~|~}
					\backslashbox{$k$}{$j$}  &
					\cellcolor{gray!30}0 & \cellcolor{gray!30}1 & \cellcolor{gray!30}2 &\cellcolor{gray!30} 3  \\
					\hhline{=::=::=::=::=:}
					
					\setlength{\fboxrule}{5pt}\fcolorbox{white}{white}{0} & \setlength{\fboxrule}{5pt}\fcolorbox{white}{white}{$1+UV$} & \setlength{\fboxrule}{5pt}\fcolorbox{white}{white}{$U$} & \setlength{\fboxrule}{5pt}\fcolorbox{white}{white}{0} & \setlength{\fboxrule}{5pt}\fcolorbox{white}{white}{$V$}\\
					
					\setlength{\fboxrule}{5pt}\fcolorbox{gray!25}{gray!25}{1} & \setlength{\fboxrule}{5pt}\fcolorbox{gray!25}{gray!25}{2} & \setlength{\fboxrule}{5pt}\fcolorbox{gray!25}{gray!25}{0} & \setlength{\fboxrule}{5pt}\fcolorbox{gray!25}{gray!25}{0} & \setlength{\fboxrule}{5pt}\fcolorbox{gray!25}{gray!25}{0} \\
					
					\setlength{\fboxrule}{5pt}\fcolorbox{white}{white}{2} & \setlength{\fboxrule}{5pt}\fcolorbox{white}{white}{3} & \setlength{\fboxrule}{5pt}\fcolorbox{white}{white}{0} & \setlength{\fboxrule}{5pt}\fcolorbox{white}{white}{1} & \setlength{\fboxrule}{5pt}\fcolorbox{white}{white}{0} \\
					
					\setlength{\fboxrule}{5pt}\fcolorbox{gray!25}{gray!25}{3} & \setlength{\fboxrule}{5pt}\fcolorbox{gray!25}{gray!25}{2} & \setlength{\fboxrule}{5pt}\fcolorbox{gray!25}{gray!25}{0} & \setlength{\fboxrule}{5pt}\fcolorbox{gray!25}{gray!25}{0} & \setlength{\fboxrule}{5pt}\fcolorbox{gray!25}{gray!25}{0} \\
					
				\end{tabular}
				\renewcommand{\arraystretch}{1.1}
			}
		\end{varwidth}
	\vspace{2mm}
		\captionof{table}{$F_{E_4,k,j}(U,V)$}\label{Order4E}
	\end{center}
\end{table}
	\end{minipage} 
\end{figure}
\vspace{-8mm}
\begin{figure}[H]
	\centering 
	\begin{minipage}[b][5.5cm][b]{.40\textwidth}
		\centering
		\begin{table}[H]
	\begin{center}
		\begin{varwidth}{\textheight}\rowcolors{2}{gray!25}{white}
			\resizebox{0.25\textheight}{!}{
				\begin{tabular}{c||c||c||c}
					\hhline{~|~|~|~}
					\backslashbox{$k$}{$j$}  &
					\cellcolor{gray!30}0 & \cellcolor{gray!30}1 & \cellcolor{gray!30}2   \\
					\hhline{=::=::=::=:}
					\setlength{\fboxrule}{5pt}\fcolorbox{white}{white}{0} & \setlength{\fboxrule}{5pt}\fcolorbox{white}{white}{$1+UV$} & \setlength{\fboxrule}{5pt}\fcolorbox{white}{white}{$U$} & \setlength{\fboxrule}{5pt}\fcolorbox{white}{white}{$V$}\\
					\setlength{\fboxrule}{5pt}\fcolorbox{gray!25}{gray!25}{1} & \setlength{\fboxrule}{5pt}\fcolorbox{gray!25}{gray!25}{3} & \setlength{\fboxrule}{5pt}\fcolorbox{gray!25}{gray!25}{0} & \setlength{\fboxrule}{5pt}\fcolorbox{gray!25}{gray!25}{0}  \\
					\setlength{\fboxrule}{5pt}\fcolorbox{white}{white}{2} & \setlength{\fboxrule}{5pt}\fcolorbox{white}{white}{3} & \setlength{\fboxrule}{5pt}\fcolorbox{white}{white}{0} & \setlength{\fboxrule}{5pt}\fcolorbox{white}{white}{0} \\
				\end{tabular}
			}
			
		\end{varwidth}
	\vspace{2mm}
		\captionof{table}{$F_{E_3,k,j}(U,V)$}\label{Order3E}
	\end{center}
\end{table}
	\end{minipage}
	\quad \quad \quad \quad \quad
	\begin{minipage}[b][5cm][b]{.40\textwidth}
		\centering
		\begin{table}[H]
	\begin{center}
		\begin{varwidth}{\textheight}\rowcolors{2}{gray!25}{white}
			\resizebox{0.25\textheight}{!}{
				\begin{tabular}{c||c||c}
					\hhline{~|~|~}
					\backslashbox{$k$}{$j$}  &
					\cellcolor{gray!30}0 & \cellcolor{gray!30}1    \\
					\hhline{=::=::=:}
					\setlength{\fboxrule}{5pt}\fcolorbox{white}{white}{0} & \setlength{\fboxrule}{5pt}\fcolorbox{white}{white}{$1+UV$} & \setlength{\fboxrule}{5pt}\fcolorbox{white}{white}{$U+V$} \\
					\setlength{\fboxrule}{5pt}\fcolorbox{gray!25}{gray!25}{1} & \setlength{\fboxrule}{5pt}\fcolorbox{gray!25}{gray!25}{4} & \setlength{\fboxrule}{5pt}\fcolorbox{gray!25}{gray!25}{0} \\
			\end{tabular}}	
		\end{varwidth}
	\vspace{2mm}
		\captionof{table}{$F_{E_2,k,j}(U,V)$}\label{Order2E}
	\end{center}
\end{table}
	\end{minipage} 
\end{figure}

Therefore by Theorem \ref{wzorek} the number $h^{p,q}(X_{d,n})$ equals:

\begin{equation*}
		\resizebox{\linewidth}{!}{ 
			$\begin{cases}
			\left\{(U+V)^{n}+\left(UV+4\sqrt{UV}+1\right)^{n}\right\}[U^{p}V^{q}] & \textup{if } d=2, \\
			
			\left\{U^{n}+V^{n}+\left(1+\sqrt[3]{UV}\right)^{3n}\right\}[U^{p}V^{q}] & \textup{if } d=3, \\
			
			\begin{aligned}[b]
				&\Bigg\{U^{n}+V^{n}+\left(1+UV+2\sqrt[4]{UV}+3\sqrt[4]{(UV)^2}+2\sqrt[4]{(UV)^3}\right)^{n}+\left(\sqrt[4]{(UV)^2}\right)^{n}
				\Bigg\}[U^{p}V^{q}]
			\end{aligned}& \textup{if } d=4,\\
			
			\begin{aligned}[b]&\Bigg\{U^{n}+V^{n}+\left(1+UV+\sqrt[6]{UV}+2\sqrt[6]{(UV)^{2}}+2\sqrt[6]{(UV)^{3}}+2\sqrt[6]{(UV)^{4}} +\sqrt[6]{(UV)^{5}}\right)^{n}+\\&+2\cdot (UV)^{\frac{n}{2}}+ \left(\sqrt[6]{(UV)^2}+\sqrt[6]{(UV)^4}\right)^{n}\Bigg\}[U^{p}V^{q}]\end{aligned}&\textup{if } d=6.
		\end{cases}$}
	\end{equation*} 

\subsection{Hodge numbers of $Y_{d,n}$}

In this section we shall derive formulas for Hodge numbers of manifolds $Y_{d,n}.$ We shall treat separately cases $2,$ $3,$ $4$ and $6.$ 
\subsubsection{$Y_{6,n}$}
In the formulas for Hodge numbers of $Y_{6,n}$ several invariants appear which describe $\Fix(\gamma_{d}^{k})$ and were introduced in \cite{C}:\label{notation}

\begin{itemize}\itemsep=1mm\leftskip=-10mm
\item[] $S_{6}$ -- $K3$ surfaces with a non-symplectic automorphism $\gamma_{6}\colon S_{6}\to S_{6}$ of order $6,$
\item[] $E_{6}$ -- elliptic curve with the Weierstrass equation $y^{2}=x^3+1,$ and automorphism $\alpha_{6}(x,y) = (\zeta_{6}^2x,-y),$ where $\zeta_{6}$ denotes a fixed $6$-th root of unity satisfying $\zeta_6^2=\zeta_3,$ 
\item[] $r=\dim H^{2}(S_6,\mathbb{C})^{\gamma_{6}},$ 
\item[] $m=\dim H^{2}(S_6,\mathbb{C})_{\zeta_{6}^{i}}$ for $i\in\{1,5\},$
\item[] $\alpha=\dim H^{2}(S_6,\mathbb{C})_{\zeta_{6}^{i}}$ for $i\in\{2,4\},$
\item[] $\beta=\dim H^{2}(S_6,\mathbb{C})_{\zeta_{6}^{3}}$,
\item[] $\Fix(\alpha_{6})=\Fix(\alpha_{6}^{3})=\{f_{1}\},$ $\Fix(\alpha_{6}^{2})=\{f_{1},f_{2},f_{3}\},$ where $\alpha_{6}(f_{2})=f_{3}$, $\alpha_{6}(f_{3})=f_{2}$ and $\Fix(\alpha_{6}^{3})=\{f_{1},f_{4},f_{5},f_{6}\},$ where $\alpha_{6}(f_{4})=f_{5}$, $\alpha_{6}(f_{5})=f_{6}$ and $\alpha_{6}(f_{6})=f_{4},$
\item[] $\Fix\left(\gamma_{6} \right)=\{K_{1}, \ldots K_{\ell-1}\}\cup \{D\}\cup \{P_{1}, \ldots, P_{p_{(2,5)}}\}\cup
	\{Q_{1}, \ldots, Q_{p_{(3,4)}}\},$ where
	\begin{itemize}\itemsep=1mm
		\item the set $\{K_{1}, \ldots K_{\ell-1}\}\cup \{D\}$ consists of curves which are fixed by $\gamma_{6}$ together with the curve $D$ of maximal genus $g(D),$ in fact $K_{i}$ are rational,
		\item $\{P_{1},\ldots, P_{p_{(2,5)}}\}$ is the set of points such that linearisation of $\gamma_{6}$ near the fixed point is represented by the diagonal matrix $\diag(\zeta_{6}^{2}, \zeta_{6}^{5}),$
		\item $\{Q_{1}, \ldots, Q_{p_{(3,4)}}\}$ is the set of points such that linearisation of $\gamma_{6}$ near the fixed point is represented by the diagonal matrix $\diag(\zeta_{6}^{3}, \zeta_{6}^{4}),$
	\end{itemize}
	\item[] $\lam{\Fix\left(\gamma_{6}^{2}\right)=\{L_{1},\ldots ,L_{k-2b-1}\}\cup \{G\}\cup \{(A_{1}, A_{1}'), \ldots, (A_{b}, A_{b}')\}\cup \{(R_{1}, R_{1}'),\ldots, (R_{n'}, R_{n'}')\}\cup \{P_{1}, \ldots, P_{p_{(2,5)}}\}},$ where
	\begin{itemize}\itemsep=1mm
		\item the set $\{L_{1},\ldots L_{k-2b-1}\}\cup \{G\}$ consists of curves which are fixed by $\gamma_{6}^{2}$ together with the curve $G$ of maximal genus $g(G),$ in fact $L_{i}$ are rational,
		\item $\{(A_{1}, A_{1}'),\ldots, (A_{b}, A_{b}')\}$ is the set of all pairs $(A_i, A_i')$ of curves which are fixed by $\gamma_{6}^{2}$ and $\gamma_{6}(A_i)=A_i'$ (curves of the third type),
		\item $\{(R_{1}, R_{1}'),\ldots, (R_{n'}, R_{n'}')\}$ is the set of pairs of points fixed by $\gamma_{6}^{2}$ and such that $\gamma_{6} (R_{i})=R_{i}',$
	\end{itemize}
	\item[] $\Fix\left(\gamma_{6}^{3}\right)=\{(M_{1}, M_{1}',M_{1}''),\ldots, (M_{a}, M_{a}',M_{a}'')\}\cup \{T_{1}, \ldots, T_{N-3a-2}\}\cup \{F_{1}, F_{2}\},$ where
	\begin{itemize}\itemsep=1mm
		\item the set $\{(M_{1}, M_{1}',M_{1}''),\ldots, (M_{a}, M_{a}',M_{a}'')\}$ consists of curves which are fixed by $\gamma_{6}^{3}$ and such that $\gamma_{6}(M_{i})=M_{i}',$ $\gamma_{6}(M_{i}')=M_{i}''$ and $\gamma_{6}(M_{i}'')=M_{i},$
		\item the set $\{T_{1}, \ldots, T_{N-3a-2}\}\cup \{F_{1}, F_{2}\}$ consists of curves fixed by $\gamma_{6}$ (and so $\gamma_{6}^{3}$), together with curves $F_{1}$ and $F_{2}$ of maximal genera $g(F_{1})$ and $g(F_{2}),$ respectively.
	\end{itemize}
\end{itemize}
The Poincar{\'e} polynomials $F_{S_6,k,j}(U,V)$ equal:
\begin{equation*}
		\begin{cases}
			(UV)^2+r\cdot UV+1 & \textup{if } k=0,\; j=0,\\
			U^2+(m-1)\cdot UV & \textup{if } k=0,\; j=1,\\
			\alpha \cdot UV & \textup{if } k=0,\; j=2,4,\\ 
			\beta  \cdot UV & \textup{if } k=0,\; j=3,\\
			V^2+(m-1)\cdot UV & \textup{if } k=0,\; j=5,\\
			\ell+p_{(2,5)}\cdot UV+p_{(3,4)}\cdot UV+g(D)\cdot(U+V)+\ell\cdot  UV & \textup{if } k=1,\; j=0, \\
			
			k-b+n{'}\cdot UV+p_{(2,5)}\cdot UV+ UV+g+(G/\gamma_{6})+(k-b)\cdot UV & \textup{if } k=2,\; j=0,\\
			
			b+n{'}\cdot UV+\Big(g(G)-g(G/\gamma_{6})\Big)\cdot(U+V)+b\cdot UV &\textup{if } k=2,\; j=3,\\
			
			N-2a+\left(g(F_1/\gamma_{6})+g(F_2/\gamma_{6})\right)\cdot(U+V)+(N-2a)\cdot UV &\textup{if } k=3,\; j=1,\\
			
			a+\frac{1}{2}\Big(g(F_{1})+g(F_{2})-g(F_1/\gamma_{6})-g(F_2/\gamma_{6})\Big)(U+V)+a\cdot UV&\textup{if } k=3,\; j=2,4,\\
			
			k-b+n{'}+p_{(2,5)}+g(G/\gamma_{6})(U+V)+(k-b)\cdot UV &\textup{if } k=4,\; j=0,\\
			
			b+n{'}+\left(g(G)-g(G/\gamma_{6})\right)\cdot(U+V)+b\cdot UV&\textup{if } k=4,\; j=3,\\
			0&\textup{otherwise.} 			
			
		\end{cases} \end{equation*}
Therefore by Theorem \ref{wzorek} the Poincar{\'e} polynomial of $Y_{6,n}$ equals:
\bgroup\allowdisplaybreaks
\begin{align*}
	&\sum\limits_{j=0}^{5}\Bigg(F_{S_6,0,j}+\sqrt[6]{UV}F_{S_6,1,j}+\sqrt[6]{(UV)^{2}}F_{S_6,2,j}+\sqrt[6]{(UV)^{3}}F_{S_6,3,j}+\sqrt[6]{(UV)^{4}}F_{S_6,4,j}+\sqrt[6]{(UV)^{5}}F_{S_6,5,j}\Bigg)\times\\&\times\Bigg(F_{E_6,0,j}+\sqrt[6]{UV}F_{E_6,1,j}+\sqrt[6]{(UV)^{2}}F_{E_6,2,j}+\sqrt[6]{(UV)^{3}}F_{E_6,3,j}+\sqrt[6]{(UV)^{4}}F_{E_6,4,j}+\sqrt[6]{(UV)^{5}}F_{E_6,5,j}\Bigg)^{n-1}=\\&=\Bigg((UV)^2+r\cdot UV+1+\sqrt[6]{UV}\cdot \Big(\ell+p_{(2,5)}\cdot UV+p_{(3,4)}\cdot UV+g(D)\cdot(U+V)+\ell\cdot  UV\Big)+\\&+\sqrt[6]{(UV)^2}\cdot\Big(k-b+n'\cdot UV+p_{(2,5)}\cdot UV+g\left(G/\phi_{6}^{S_6}\right)(U+V)+(k-b)\cdot UV \Big)+\\&+\sqrt[6]{(UV)^3}\cdot \Big(N-2a+\left(g\left(\quot{F_1}{\gamma_{6}}\right)+g\left(\quot{F_2}{\gamma_{6}}\right)\right)\cdot(U+V)+(N-2a)\cdot UV \Big)+\\&+\sqrt[6]{(UV)^4}\cdot\Big(k-b+n'+p_{(2,5)}+g(G/\gamma_{6})(U+V)+(k-b)\cdot UV \Big)+\\&+\sqrt[6]{(UV)^5}\cdot\Big(\ell+p_{(2,5)}+p_{(3,4)}+g(D)\cdot(U+V)+\ell\cdot UV \Big)\Bigg)\cdot\\&\cdot \Big(1+UV+\sqrt[6]{UV}+2\sqrt[6]{(UV)^2}+2\sqrt[6]{(UV)^3}+2\sqrt[6]{(UV)^4}+\sqrt[6]{(UV)^5}\Big)^{n-1}+\\&+\Big(U^2+(m-1)\cdot UV \Big)\cdot U^{n-1}+\Bigg(\alpha\cdot UV+\sqrt[6]{(UV)^3}\cdot\Big(a+\frac{1}{2}\Big(g(F_{1})+g(F_{2})-\\&-g\left(\quot{F_1}{\gamma_{6}}\right)-g\left(\quot{F_2}{\gamma_{6}}\right)\Big)\cdot(U+V)+a\cdot UV\Big) \Bigg)\cdot \Big(\sqrt[6]{(UV)^3} \Big)^{n-1}+\\&+
	\Bigg(\beta\cdot UV+\sqrt[6]{(UV)^2}\cdot\Big(b+n'\cdot UV+\Big(g(G)-g(G/\gamma_{6})\Big)\cdot(U+V)+b\cdot UV\Big)+\\&+\sqrt[6]{(UV)^4}\cdot\Big(b+n'+\Big(g(G)-g(G/\gamma_{6})\Big)\cdot(U+V)+b\cdot UV\Big)\Bigg)\cdot \Big(\sqrt[6]{(UV)^2}+\sqrt[6]{(UV)^4} \Big)^{n-1}+\\&+ \Bigg(\alpha\cdot UV+\sqrt[6]{(UV)^3}\cdot\Big(a+\frac{1}{2}\Big(g(F_{1})+g(F_{2})-g\left(\quot{F_1}{\gamma_{6}}\right)-g\left(\quot{F_2}{\gamma_{6}}\right)\Big)\cdot(U+V)+\\&+a\cdot UV\Big) \Bigg)\cdot \Big(\sqrt[6]{(UV)^3} \Big)^{n-1}+\Big(Y^{2}+(m-1)\cdot UV \Big)\cdot V^{n-1}.
\end{align*}\egroup


\subsubsection{$Y_{4,n}$}

Let us keep the following notation for invariants of $K3$ surface and elliptic curve given in \cite{C}:

\begin{itemize}\itemsep=1mm\leftskip=-10mm
		\item[] $S_{4}$ -- $K3$ surfaces with a non-symplectic automorphism $\gamma_{4}\colon S_{4}\to S_{4}$ of order $4$
		\item[] $E_{4}$ -- elliptic curve with the Weierstrass equation $y^{2}=x^3+x,$ and automorphism $\alpha_{4}$ is given by $\alpha_{4}(x,y) = (-x,iy),$
		\item[]$r=\dim H^{2}(S_4,\mathbb{C})^{\gamma_{4}},$ 
		\item[]$m=\dim H^{2}(S_4,\mathbb{C})_{\zeta_{4}^{i}}$ for $i\in\{1,2\},$
		\item[] $\alpha=\dim H^{2}(S_4,\mathbb{C})_{\zeta_{4}^{2}}$,
	\item[] $\Fix(\alpha_{4})=\Fix(\alpha_{4}^{3})=\{f_{1},f_{2}\},$ $\Fix(\alpha_{4}^{2})=\{f_{1},f_{2},f_{3},f_{4}\},$ where $\alpha_{4}(f_{3})=f_{4}$ and $\alpha_{4}(f_{4})=f_{3},$ 
	\item[]$\Fix\left(\gamma_{4}^{2}\right)=L_{1}\cup L_{2} \cup \ldots \cup L_{N-b-2a-1}\cup \{D\}\cup \{(A_{1}, A_{1}'), (A_{2}, A_{2}'),\ldots, (A_{a}, A_{a}')\}\cup \{B_{1}, B_{2}, \ldots, B_{b}\},$ where
	\begin{itemize}\itemsep=1mm
		\item $\{(A_{1}, A_{1}'), (A_{2}, A_{2}'),\ldots, (A_{a}, A_{a}')\}$ is the set of all pairs $(A_i, A'_i)$ of curves which are fixed by $\gamma_{4}^{2}$ and $\gamma_{4}(A_i)=A'_i$,
		\item $\{B_{1}, B_{2}, \ldots, B_{b}\}$ is the set of curves which are fixed by $\gamma_{4}^{2}$ and are non-invariant by $\gamma_{4}$.
	\end{itemize}
	\item[] $\Fix\left(\gamma_{4}\right)=\{R_{1}, R_{2},\ldots R_{k-1}\}\cup \{G\}\cup \{P_{1}, P_{2}, \ldots, P_{n_{1}}\}\cup
	\{Q_{1}, Q_{2}, \ldots, Q_{n_{2}}\},$ where
	\begin{itemize}\itemsep=1mm
		\item the set $\{R_{1}, R_{2},\ldots R_{k-1}\}\cup \{G\}$ contains of curves which are fixed by $\gamma_{4}$ together with the curve $G$ of maximal genus $g(G),$ curves $R_{i}$ are rational,
		\item $\{P_{1}, P_{2}, \ldots, P_{n_{1}}\}$ is the set of points which are fixed by $\gamma_{4}$ not laying on the curve $D,$ 
		\item $\{Q_{1}, Q_{2}, \ldots, Q_{n_{2}}\}$ is the set of points which are fixed by $\gamma_{4}$ laying on the curve $D.$ 
	\end{itemize}
\end{itemize}
The Poincar{\'e} polynomials $F_{S_6,k,j}(U,V)$ equal:
\begin{equation*}
	 \begin{cases}
			(UV)^2+r\cdot UV+1 & \textup{if } k=0,\; j=0,\\
			U^2+(m-1)\cdot UV & \textup{if } k=0,\; j=1,\\
			(22-r-2m) \cdot UV & \textup{if } k=0,\; j=2,\\ 
			V^2+(m-1)\cdot UV & \textup{if } k=0,\; j=3,\\
			k+(n_1+n_2)\cdot UV+g(G)\cdot(U+V)+k\cdot UV & \textup{if } k=1,\; j=0, \\
			
			N-a+g\left(\quot{D}{\gamma_{4}}\right)\cdot(U+V)+(N-a)\cdot UV & \textup{if } k=2,\; j=0,\\
			
			a+\left(g(D)-g\left(\quot{D}{\gamma_{4}}\right)\right)(U+V)+a\cdot UV &\textup{if } k=2,\; j=2,\\
			
			k+n_1+n_2+g(G)\cdot(U+V)+k\cdot UV &\textup{if } k=3,\; j=0,\\
			0&\textup{otherwise.} 			
			
		\end{cases} \end{equation*}
Therefore by Theorem \ref{wzorek} the Poincar{\'e} polynomial of $Y_{4,n}$ equals:
\bgroup\allowdisplaybreaks
\begin{align*}&\Bigg((UV)^2+r\cdot UV+1+\Big(k+(n_1+n_2)\cdot UV+g(G)\cdot(U+V)+k\cdot UV\Big)\cdot \sqrt[4]{UV}+\\&+\Big(N-a+g\left(\quot{D}{\gamma_{4}}\right)\cdot(U+V)+(N-a)\cdot UV\Big)\cdot\sqrt[4]{(UV)^2}+\Big(k+n_1+n_2+\\&+g(G)\cdot(U+V)+k\cdot UV\Big)\cdot\sqrt[4]{(UV)^3}\Bigg)\cdot \Big(1+UV+2\sqrt[4]{UV}+3\sqrt[4]{(UV)^3}+2\sqrt[4]{(UV)^3}\Big)^{n-1}+\\&+\Big(U^2+(m-1)\cdot UV\Big)\cdot U^{n-1}+\Big((22-r-2m) \cdot UV+ \Big(a+\left(g(D)-g\left(\quot{D}{\gamma_{4}}\right)\right)(U+V)+\\&+a\cdot UV\Big)\cdot\sqrt[4]{(UV)^{2}} \Big)\cdot\Big(\sqrt[4]{(UV)^{2}}\Big)^{n-1}+ \Big(V^2+(m-1)\cdot UV\Big)\cdot V^{n-1}. \end{align*}\egroup

\subsubsection{$Y_{3,n}$}	

Let us keep the following notation for invariants of $K3$ surface and elliptic curve given in \cite{C}:

\begin{itemize}\itemsep=1mm\leftskip=-10mm
		\item[] $S_{3}$ -- $K3$ surfaces with a non-symplectic automorphism $\gamma_{3}\colon S_{3}\to S_{3}$ of order $3,$
		\item[] $E_{3}$ -- elliptic curve with the Weierstrass equation $y^{2}=x^3+1,$ and automorphism $\alpha_{3}$ is given by $\alpha_{3}(x,y) = (\zeta_{3}x,y),$
		\item[]$r=\dim H^{2}(S_3,\mathbb{C})^{\gamma_{3}},$ 
		\item[]$m=\dim H^{2}(S_3,\mathbb{C})_{\zeta_{3}},$
		\item[] $\Fix\left(\gamma_{3}\right)=\Fix\left(\gamma_{3}^{2}\right)=\{f_{1},f_{2},f_{3}\},$ 
	    \item[] $\Fix\left(\gamma_{3}\right)=L_{1}\cup L_{2} \cup \ldots \cup L_{k-1}\cup C\cup \{P_{1}, P_{2}, \ldots, P_{h}\},$ where
	\begin{itemize}\itemsep=1mm
		\item the set $\{L_{1}, L_{2},\ldots L_{k-1}\}\cup \{C\}$ consists of curves which are fixed by $\gamma_{3}$ together with the curve $C$ of maximal genus $g(C),$ in fact $L_{i}$ are rational,
		\item $\{P_{1}, P_{2}, \ldots, P_{n}\}$ is the set of points which are fixed by $\gamma_{3}.$
	\end{itemize}
\end{itemize}
The Poincar{\'e} polynomials $F_{S_3,k,j}(U,V)$ equal:

\begin{equation*}
	 \begin{cases}
			(UV)^2+r\cdot UV+1 & \textup{if } k=0,\; j=0,\\
			U^2+(m-1)\cdot UV & \textup{if } k=0,\; j=1,\\
			V^2+(m-1)\cdot UV & \textup{if } k=0,\; j=2,\\
			k+h\cdot UV+g(C)\cdot(U+V)+k\cdot UV & \textup{if } k=1,\; j=0, \\
			
			k+h+g(C)\cdot(U+V)+k\cdot UV & \textup{if } k=2,\; j=0,\\
			0&\textup{otherwise.} 			
			
		\end{cases} \end{equation*}
Therefore by Theorem \ref{wzorek} the Poincar{\'e} polynomial of $Y_{3,n}$ equals:
\bgroup\allowdisplaybreaks
\begin{align*}&\Bigg((UV)^2+r\cdot UV+1+\Big(k+h\cdot UV+g(C)\cdot(U+V)+k\cdot UV\Big)\cdot \sqrt[3]{UV}+\Big(k+h+\\&+g(C)\cdot(U+V)+k\cdot UV\Big)\cdot\sqrt[3]{(UV)^2}\Bigg)\cdot \Big(1+UV+3\sqrt[3]{UV}+3\sqrt[3]{(UV)^2}\Big)^{n-1}+\\&+\Big(U^2+(m-1)\cdot UV\Big)\cdot U^{n-1}+\Big(V^2+(m-1)\cdot UV\Big)\cdot V^{n-1}. \end{align*}\egroup

\subsubsection{$Y_{2,n}$}
\label{rzad2}

For a $K3$ surface $S_2$ with involution we have the following invariants:

\begin{itemize}\itemsep=1mm\leftskip=-10mm
	\item[]$S_{2}$ -- $K3$ surfaces with a non-symplectic automorphism $\gamma_{2}\colon S_{2}\to S_{2}$ involution,
	\item[]$E_{2}$ -- arbitrary elliptic curve with involution $\alpha_{2}(x,y) = (x,-y),$
	\item[]$r=\dim H^{2}(S_2,\mathbb{C})^{\gamma_{2}},$
	\item[]$m=\dim H^{2}(S_2,\mathbb{C})_{\zeta_{2}^{}},$
	\item[]$\Fix\left(\alpha_{2}\right)=\{a,b,c,d\},$
	\item[]$\Fix\left(\gamma_{2}\right)=C_{1}\cup C_{2} \cup \ldots \cup C_{N}$ where
	the set $\{C_{1}, C_{2},\ldots ,C_{N}\}$ contains of curves which are fixed by $\gamma_{2}$ with sum of genera equals $N'.$
\end{itemize}
The Poincar{\'e} polynomials $F_{S_2,k,j}(U,V)$ equal:
\begin{equation*}
	 \begin{cases}
			(UV)^2+r\cdot UV+1 & \textup{if } k=0,\; j=0,\\
			U^2+V^2+(m-2)\cdot UV & \textup{if } k=0,\; j=1,\\
			N+N'\cdot(U+V)+N\cdot UV & \textup{if } k=1,\; j=0,\\
			0 &\textup{if } k=1,\; j=1. 	
		\end{cases} \end{equation*}
Therefore by Theorem \ref{wzorek} the Poincar{\'e} polynomial of $Y_{2,n}$ equals:	
\bgroup\allowdisplaybreaks
\begin{align*} &\Bigg((UV)^2+r\cdot UV+1+\Big(N+N'\cdot(U+V)+N\cdot UV\Big)\cdot \sqrt{UV}\Bigg)\cdot \Big(1+UV+4\sqrt{UV}\Big)^{n-1}+\\&+\Big(U^2+V^2+(m-2)\cdot UV\Big)\cdot (U+V)^{n-1}. \end{align*}\egroup
Using the following relations (see \cite{V}): $$r=10+N-N'\quad \textup{and} \quad m=12-N+N'$$ we can rewrite the above formula in terms of $N$ and $N',$ i.e. 
\begin{align*}&\Bigg((UV)^2+(10+N-N')\cdot UV+1+\Big(N+N'\cdot(U+V)+N\cdot UV\Big)\cdot \sqrt{UV}\Bigg)\times\\ &\times \Big(1+UV+4\sqrt{UV}\Big)^{n-1}+\Big(U^2+V^2+(10-N+N')\cdot UV\Big)\cdot (U+V)^{n-1}. \end{align*}

\begin{Rem} We created supporting file containing \texttt{MAPLE} procedures computing Poincar{\'e} polynomials of manifolds $X_{d,n}$ and $Y_{d,n}$ (\cite{Kody}). \end{Rem}

\section{zeta functions of a finite quotient}

In this section we prove a formula which is analogue to the one stated in Thm. \ref{wzorek} for zeta functions of a finite quotients. Since the zeta function may be different for any member in the family and as it is an arithmetic function depending on prime power $q,$ the problem is much more subtle.  

Let $V,$ $W$ be finite dimensional vector spaces over a field $\mathbb{K}$ and take $L\in \textup{End}(V),$ $M\in \textup{End}(W).$ We define their characteristic polynomials:
$$f(t):=\det(\mathbbm{1}-t\cdot L)\quad \textup{and}\quad g(t):=\det(\mathbbm{1}-t\cdot M). $$ Over an algebraic closure $\bar{\mathbb{K}}$ of $\mathbb{K}$ we have factorisations $$f(t)=(1-\lambda_{1}t)(1-\lambda_{2}t)\ldots (1-\lambda_{\dim V}t)$$ and 
$$g(t)=(1-\mu_{1}t)(1-\mu_{2}t)\ldots (1-\mu_{\dim W}t),$$ for some $\lambda_{1}, \lambda_{2}, \ldots, \lambda_{\dim V},\mu_{1}, \mu_{2}, \ldots, \mu_{\dim W}\in \bar{\mathbb{K}}$ which are in fact eigenvalues of endomorphisms $L_{\bar{\mathbb{K}}}$~and $M_{\bar{\mathbb{K}}}$ respectively.   

Denoting by $f\otimes g$ the characteristic polynomial of $L\otimes M$: $$f\otimes g(t)=\det(\mathbbm{1}-t\cdot L\otimes M),$$ can be written as:
$$f\otimes g(t):=\prod_{i=1}^{\dim V}\prod_{j=1}^{\dim W}(1-\lambda_{i}\mu_{j}t).$$ One can see that $$f\otimes g(t)=\prod_{i=1}^{\dim V}g(\lambda_{i}t)=\prod_{j=1}^{\dim W}f(\mu_{j}t).$$ Moreover one can check that this polynomial can be computed using the resultant i.e. $$f\otimes g(t)= \textrm{res}_{s}\left(f(s), s^{\deg(g)}\cdot g\left( \frac{t}{s}\right) \right).$$

The tensor product of polynomials extends uniquely to the case of rational functions~$f=\dfrac{a}{b},$ $g=\dfrac{c}{d}$, where $a, b, c, d\in \mathbb{K}[t]$ by taking:

$$\frac{a}{b}\otimes \frac{c}{d}=\frac{(a\otimes c)\cdot (b\otimes d)}{(a\otimes d)\cdot (b\otimes c)}.$$

Rose in \cite{Rose} introduced the \textit{orbifold Frobenius morphisms} on the Chen-Ruan orbifold cohomology and used it to define the \textit{orbifold zeta function} 
\begin{equation}\label{rosee} Z_{\textrm{CR}}(\mathcal{X}, t):=\det \left(1-\Frob_{\orb}t \mid H^{*}_{\textrm{CR}}\left(\mathcal{X}\times_{\FF_{q}}\bar{\FF_{q}}, \QQ_{l}\right)\right), \end{equation} where $H^{*}_{\textrm{CR}}(\mathcal{X}, \QQ_{l})$ is $\ell$-adic Chen Ruan cohomology and $\Frob_{\orb}$ is the \textit{orbifold Frobenius morphism} defined in Section 3 and 5, respectively of \cite{Rose}. It was also proven (\cite[Corollary 6.4]{Rose}) that for a crepant resolution $\tylda{X}\to X$ of the course moduli scheme $X$ of $\mathcal{X}$ the orbifold cohomological zeta function of $\mathcal{X}$ coincides with classical zeta function of $\tylda{X}$ i.e. $$Z_{H^{*}_{\textrm{CR}}}(\mathcal{X}, t)=Z_{q}(\tylda{X},t).$$

\begin{Thm}\label{zetawzorek} Let $X_{i}$ be a variety of Calabi-Yau type with a $pnpp$ automorphism $\phi_{i,d}\colon X_i\to X_i$ of order $d$. Suppose that there exists a crepant resolution $\mathcal{X}_{d,n}$ of the quotient variety $$(X_{1}\times X_{2}\times \ldots \times X_{n})/ \ZZ_{d}^{n-1}.$$ Then the zeta function $Z_{q}\left(\mathcal{X}_{d,n}\right)(T)$ equals the product of factors of the following rational function in~$T$ \begin{equation}\left(\prod_{j=0}^{d-1}\bigotimes_{i=1}^{n}\left(\prod_{m=0}^{d-1}Z_{X_{i}, m, j}\left(q^{\frac{m}{d}}T\right)\right)\right)^{(-1)^{n+1}} \end{equation} containing only integral powers of $q,$ where
	$$Z_{X_{i},m,j}(T)=\prod_{\lambda_{i}\geq 0}\prod_{0\leq k_{i}\leq 2\dim X_{i}}\det\left(1-\Fr_{q}^{*}T\mid H^{k_{i}}\left( X_{i,m,\lambda_{i}}\right)_{\zeta_{d}^j}\right)^{(-1)^{k_i+1}}(q^{\lambda_i}T).$$
\end{Thm}

\begin{proof}
	Similarly as in the proof of \ref{wzorek} merging formulas \ref{rosee} and \ref{orbfor} we have the following formula for zeta function of $\mathcal{X}_{d,n}$:
	\bgroup\allowdisplaybreaks
	\begin{align*}\label{finalnywzorek} &Z_{q}\left(\mathcal{X}_{d,n}\right)(T)=\prod_{m\in G_{d,n}}\prod_{\mathfrak{\lambda}\geq 0}\prod_{|\mathfrak{k}|=k-2\frac{|\mathfrak{m}|}{d}-\mathfrak{\lambda}}\prod_{j=0}^{d-1}\bigotimes_{i=1}^{n}\det\left(1-\Frob_{q}T\mid H^{k_{i}}\left(X_{i, m_i,\lambda_i}\right)\right)^{(-1)^{k_i+1}}=\\&=
		\prod_{m\in G_{d,n}}\prod_{\mathfrak{\lambda}\geq 0}\prod_{|\mathfrak{k}|=k}\prod_{j=0}^{d-1}\bigotimes_{i=1}^{n}\det\left(1-\Frob_{q}T\mid H^{k_{i}}\left(X_{i, m_i,\lambda_i}\right)\right)^{(-1)^{k_i+1}}\left(q^{\frac{m_i}{d}+\lambda_i}T\right)=\\&=\prod_{m\in G_{d,n}}\prod_{\mathfrak{\lambda}\geq 0}\prod_{|\mathfrak{k}|=k}\prod_{j=0}^{d-1}\Bigg(\bigotimes_{i=1}^{n}\det\left(1-\Frob_{q}T\mid H^{k_{i}}\left(X_{i, m_i,\lambda_i}\right)\right)\Bigg)^{(-1)^{|\mathfrak{k}|+n}}\left(q^{\frac{m_i}{d}+\lambda_i}T\right),
	\end{align*}
	\egroup
	where $\mathfrak{\lambda}=(\lambda_{1}, \lambda_{2}, \ldots, \lambda_{n}),$ $\mathfrak{k}=(k_{1}, k_{2}, \ldots, k_{n})$, $\mathfrak{m}=(m_{1}, m_{2}, \ldots, m_{n})$ and $|\mathfrak{\lambda}|=\lambda_{1} +\lambda_{2}+\ldots+ \lambda_{n},$ $|\mathfrak{k}|=k_{1} +k_{2}+\ldots+ k_{n}$, $|\mathfrak{m}|=m_{1} +m_{2}+\ldots+ m_{n}.$ 
	
	Extending the exterior product $\displaystyle \prod_{m\in G_{n,d}}(\ldots)$ to $\displaystyle \prod_{m\in \ZZ_{d}^{n}}(\ldots)$ we introduce only factors containing fractional powers of $q$. Denote the resulting rational function by 
	\begin{align*}
		&W(T)=\prod_{m\in \ZZ_{d}^{n}}\prod_{\pmb{\lambda}\geq 0}\prod_{|\mathfrak{k}|=k}\prod_{j=0}^{d-1}\Bigg(\bigotimes_{i=1}^{n}\det\left(1-\Frob_{q}T\mid H^{k_{i}}\left(X_{i, m_i,\lambda_i}\right)\right)\Bigg)^{(-1)^{|\mathfrak{k}|+n}}\left(q^{\frac{m_i}{d}+\lambda_i}T\right)=\\&=\left(\prod_{j=0}^{d-1}\bigotimes_{i=1}^{n}\left(\prod_{m=0}^{d-1}Z_{X_{i, m, j}}\left(q^{\frac{m}{d}}T\right)\right)\right)^{(-1)^{n+1}}.\end{align*}
	
	Let $W=W_{1}^{\alpha_{1}}\cdot W_{2}^{\alpha_{2}}\cdot  \ldots \cdot W_{s}^{\alpha_{s}}$ be the decomposition of $W$ into product of irreducible polynomials $W_{i}\in \ZZ[q^{\frac{1}{d}},T]$, for $\alpha_{i}\in \ZZ.$ Then 
	$$Z_{q}\left(\mathcal{X}_{d,n}\right)(T)=\prod\left\{W_{i}^{\alpha_{1}}\colon W_{i}\in \ZZ[q,T],\; i=1,\ldots,s \right\}.$$
\end{proof}

\subsection{Explicit computation of the zeta function}

\subsubsection{Zeta function of Borcea-Voisin Calabi-Yau threefold}

In the present section we shall use Thm 4.1 to reprove formula for zeta function of classical Borcea-Voisin Calabi-Yau threefolds (from \cite{GOTOYUI}). 

Let $(S, \alpha_{S})$ be a $K3$ surface admitting a non-symplectic involution $\alpha_{S}$. Consider an elliptic curve $E$ with not period preserving involution $\alpha_{E}$. Let us keep notation from the section \ref{rzad2}. Observe that
$$Z_{E,0,0}(T)=\dfrac{1}{(1-T)(1-qT)}, \quad Z_{E,0,1}(T)=1-a_{q}T+qT^{2}, \quad Z_{E,1,1}(T)=1.$$ The polynomial $Z_{E,1,0}$ depends on the number of points in $\Fix(\alpha_{E})$ which are defined over $\FF_q$. Therefore $$Z_{E,1,0}=\begin{cases} \frac{1}{(1-T)^{4}}, & \textup{if all points in $\Fix(\alpha_{E})$ are defined over $\FF_q$,} \\ \frac{1}{(1-T)^{3}(1+T)}, & \textup{if two points in $\Fix(\alpha_{E})$ are defined over $\FF_q$,} \\ \frac{1}{(1-T)^{2}(1+T+T^2)} & \textup{if one point in $\Fix(\alpha_{E})$ are defined over $\FF_q$.} \end{cases}$$
\textbf{The polynomial $Z_{S,0,0}$}:\quad In that case:
$Z_{S,0,0}=Z_{q}\left( H^{**}(S)^{\alpha_{S}}\right).$ The Frobenius map acting on $r$ curves induces permutation $\pi \in S_{r}$ with decomposition into disjoint cycles of lengths $a_{1},$ $a_{2},$ $\ldots,$ $a_{s}$ for some positive integer $s.$ Therefore
$$Z_{S,0,0}=\displaystyle\frac{1}{(1-T)(1-qT)\displaystyle\prod_{i=1}^{s}\big(1-(qT)^{a_{i}}\big)\left(1-q^{2}T\right)}.$$
\textbf{The polynomial $Z_{S,0,1}$}:\quad One can see that $$H^{2}\left(S, \ZZ\right)=T(S)\oplus NS(S),$$ 
where $NS(S)$ is N{\'e}ron-Severi group generated by algebraic cycles on $S$ and $T(S)$ is transcendental lattice of $S$ defined as an orthogonal complement of $NS(S)$ in $H^{2}\left(S, \ZZ\right)$. In general we cannot say much more but in special cases we can find explicit description of the polynomial $Z_{S,0,1}$. In particular if $S$ is one of $K3$ surfaces appearing in Borcea-Voisin construction then the polynomial can be read out from Theorem 5.6 of \cite{GOTOYUI}. \newline \newline 
\textbf{The polynomial $Z_{S,1,0}$}:\quad Let $C_{g}$ be the curve of maximal genus $g$ in $\Fix(\alpha_{S})$. Then we see that
$$Z_{S,1,0}=Z_{q}\left(H_{\et}^{*}\left(\Fix(\alpha_{S})\right)\right)=\frac{\det\left(1-t\cdot \Frob_{q} | H_{\et}^{1}(C_{g}, \QQ_{l})\right)}{(1-T)(1-q^{}T)\displaystyle\prod_{i=1}^{s}\left(1-(qT)^{a_{s}}\right)}.$$ 
\textbf{The polynomial $Z_{S,1,1}$} is obviously equal to 1. 

We shall compute the zeta function of $(S\times E^{n})/G_{d,n}$ in the case when $S$ has particularly nice arithmetic properties. 

Let $S$ be the $K3$ surface with an obvious non-symplectic involution studied in \cite{OnoPeniston}. It can be defined as double cover of $\PP^{2}$ branched along the union of six lines given by $$XYZ(X+\lambda Y)(Y+Z)(Z+X)=0.$$
The resolution of the singularities of that surface is obtained in the following way: first we blow up 3 triple points that defines 24 points on the double cover, then we blow up resulting variety at 15 double points. 
Keeping notation from \cite{OnoPeniston} we compute:
$$Z_{S,0,0}(T)=\frac{1}{(1-T)(1-qT)^{19}\left(1-q^{2}T\right)}, \quad Z_{S,1,0}(T)=\frac{1}{(1-T)^{9}(1-q^{}T)^{9}}$$ and $$Z_{S,0,1}(T)=\frac{1}{\left(1-\gamma_{q}qT\right)\left(1-\gamma_{q}\pi^{2}T\right)\left(1-\gamma_{q}\bar{\pi}^{2}T\right)},$$ where $\pi$ and $\bar{\pi}$ are the eigenvalues of the Frobenius on elliptic curve $$E_{\lambda}\colon y^{2}=(x-1)\left(x^2-\frac{1}{\lambda+1} \right)$$ and $\gamma_{q}=\left(\frac{\lambda+1}{p} \right).$

Therefore from Theorem \ref{zetawzorek} we have the following formula for zeta function of the classical Borcea-Voisin threefold:

\bgroup\allowdisplaybreaks
\begin{align*}Z_{q}\Big(\tylda{\quot{S\times E}{\ZZ_{2}}}\Big)&=\Bigg(\Big(Z_{E,0,0}(T)\cdot Z_{E,1,0}\left(\sqrt[]{q}\cdot T\right)  \Big)\otimes \Big(Z_{S,0,0}(T)\cdot Z_{S,1,0}\left(\sqrt[]{q}\cdot T\right)  \Big) \Bigg) \times \\ &\times \Bigg(\Big(Z_{E,0,1}(T)\cdot Z_{E,1,1}\left(\sqrt[]{q}\cdot T\right)  \Big)\otimes \Big(Z_{S,0,1}(T)\cdot Z_{S,1,1}\left(\sqrt[]{q}\cdot T\right) \Big) \Bigg). \\
	&\end{align*}
\egroup

Since we have three cases depending on number of fixed points of $\alpha_{E}$ defined over $\FF_{q},$ resulting local zeta functions are summarized in the following table:

\begin{table}[H]
	\begin{center}
		\begin{varwidth}{\textheight}\rowcolors{2}{gray!25}{white}
			\rowcolors{2}{gray!25}{white}
			\resizebox{0.58\textheight}{!}{
				\begin{tabular}{c||c}
					\hhline{~|~}
					\cellcolor{gray!30}\parbox[c]{0.17\hsize}{\centering\setlength{\fboxrule}{3pt}\fcolorbox{gray!30}{gray!30}{$\displaystyle Z_{E,1,0}$}}	 &\cellcolor{gray!30}\parbox[c]{0.6\hsize}{\centering\setlength{\fboxrule}{3pt}\fcolorbox{gray!30}{gray!30}{$\displaystyle Y_{2,2}=Z_{q}\Big(\tylda{(S\times E)/\ZZ_{2}}\Big)$}}  \\
					\hhline{=::=:}
					\parbox[c]{0.17\hsize}{\centering\setlength{\fboxrule}{1pt}\fcolorbox{white}{white}{$\displaystyle\dfrac{1}{(1-T)^{4}}$}} &   \parbox[c]{0.6\hsize}{\centering\setlength{\fboxrule}{5pt}\fcolorbox{white}{white}{$\displaystyle{\frac {  \left( 1-{\it a_{q}}\,{\it \gamma_{q}}\,qT+{{\it \gamma_{q}}}^{2}{q}^{3}{T}^{2} \right)  \left( 1-{\it a_{q}}\,{{\it \pi}}^{2}{\it \gamma_{q}}\,T+{{\it \pi}}^{4}{{\it \gamma_{q}}}^{2}q{T}^{2} \right)  \left( 1-{\it a_{q}}\,{{\it \bar{\pi}}}^{2
									}{\it \gamma_{q}}\,T+{{\it \bar{\pi}}}^{4}{{\it \gamma_{q}}}^{2}q{T}^{2} \right) }{\left(1-T\right)\left( 1-qT \right) ^{56}  \left( 1-{q}^{2}T \right) ^{56}\left(1-{q}^{3}T \right) }}$}} \\
					\parbox[c]{0.17\hsize}{\centering\setlength{\fboxrule}{1pt}\fcolorbox{gray!25}{gray!25}{$\displaystyle\dfrac{1}{(1-T)^{3}(1+T)}$}}
					
					& \parbox[c]{0.6\hsize}{\centering\setlength{\fboxrule}{3pt}\fcolorbox{gray!25}{gray!25}{$\displaystyle{\frac{ \left( 1-{\it a_{q}}\,{\it \gamma_{q}}\,qT+{{\it \gamma_{q}}}^{2}{q}^{3}{T}^{2} \right)  \left( 1-{\it a_{q}}\,{{\it \pi}}^{2}{\it \gamma_{q}}\,T+{{\it \pi}}^{4}{{\it \gamma_{q}}}^{2}q{T}^{2} \right)  \left( 1-{\it a_{q}}\,{{\it \bar{\pi}}}^{2
									}{\it \gamma_{q}}\,T+{{\it \bar{\pi}}}^{4}{{\it \gamma_{q}}}^{2}q{T}^{2} \right) }{\left( 1-T \right)\left( 1-qT \right) ^{47}  \left( 1+qT \right) ^{9} \left( 1+{q}^{2}T \right) ^{9} \left( 1-{q}^{2}T \right) ^{47}\left( 1-{q}^{3}T \right)  }}$}} \\
					\parbox[c]{0.17\hsize}{\centering\setlength{\fboxrule}{1pt}\fcolorbox{white}{white}{$\displaystyle\dfrac{1}{(1-T)^{2}(1+T+T^2)}$}} &
					\parbox[c]{0.60\hsize}{\centering\setlength{\fboxrule}{3pt}\fcolorbox{white}{white}{$\displaystyle{\frac{\left( 1-{\it a_{q}}\,{\it \gamma_{q}}\,qT+{{\it \gamma_{q}}}^{2}{q}^{3}{T}^{2} \right)  \left( 1-{\it a_{q}}\,{{\it \pi}}^{2}{\it \gamma_{q}}\,T+{{\it \pi}}^{4}{{\it \gamma_{q}}}^{2}q{T}^{2} \right)  \left( 1-{\it a_{q}}\,{{\it \bar{\pi}}}^{2
									}{\it \gamma_{q}}\,T+{{\it \bar{\pi}}}^{4}{{\it \gamma_{q}}}^{2}q{T}^{2} \right) }{\left( 1-T \right)\left( 1-qT \right) ^{38} \left( 1-{q}^{2}T \right) ^{38} \left( 1+{q}^{2}T+{q}^{4}{T}^{2} \right) ^{9}   \left( 1+qT+{q}^{2}{T}^{2} \right) ^{9
									}\left( 1-{q}^{3}T \right)}}$}}
				\end{tabular}
			}
		\end{varwidth}
	\vspace{2mm}
		\captionof{table}{Local zeta functions of $Y_{2,2}$}\label{zetaBV}
	\end{center}
\end{table}

\subsubsection{Zeta function of $Y_{6,n}$}

We shall give details of computation of the zeta function of a Borcea-Voisin Calabi-Yau $(n+1)$-fold with a very particular shape of the Hodge diamond i.e. non-zero Hodge numbers are placed only in the vertices, vertical diagonal and $h^{m-1, m}=h^{m, m-1}=1$, for $m=\frac{n}{2}+1$ when $n$ is even. As the consequence the Hodge structure on the middle cohomology of odd dimensional examples has the following shape $$(1\;0\;\ldots\; 0\; 1\;1\; 0\; \ldots \;0\;1).$$ In particular the transcendental part of the cohomology is of dimension 2 when $n$ is odd, and it is a union of two 2-dimensional pieces when $n$ is even. 

Consider an elliptic curve $E_{6}$ with the Weierstrass equation $y^2=x^3+1$ together with a not period preserving automorphism of order $6$. 

Let $S_6$ be the $K3$ surface no. 18 from Table 1 of \cite{Order6D}. Then $S_6$ is isomorphic to an elliptic $K3$ surface $X\to \PP^{1}$ whose Weierstrass equation is $$\label{rownanie} y^{2}=x^{3}+\lambda (z-1)^{2}z^{5},$$ where $\lambda\in \ZZ$ is a fixed parameter. The surface $S_6$ is equipped with the following $\zeta_{6}$-action $\alpha_{S_{6}}\colon (x,y,t)\to (\zeta_{3}^{2}x, y, z).$
The elliptic fibrations $S_6\to \PP^{1}$ has 1 type $\textbf{IV}$ fiber and $2$ type $\textbf{II}^{*}$ fibers. 
The surface $S_6$ has the following invariants

\begin{table}[H]
	\begin{center}
		\begin{varwidth}{\textheight}
			\resizebox{0.45\textheight}{!}{
				\begin{tabular}{|c||c||c||c||c||c||c||c||c||c||c||c||c|}
					\cellcolor{gray!30}$r$ & \cellcolor{gray!30}$m$ & \cellcolor{gray!30}$n$ & \cellcolor{gray!30}$n'$ & \cellcolor{gray!30}$k$ &\cellcolor{gray!30}$a$ & \cellcolor{gray!30}$p_{3,4}$ &  \cellcolor{gray!30}$p_{2,5}$ & \cellcolor{gray!30}$\ell$ & \cellcolor{gray!30}$N$ & \cellcolor{gray!30}$b$ & \cellcolor{gray!30}$\alpha$ & \cellcolor{gray!30}$\beta$\\
					\hhline{=::=::=::=::=::=::=::=::=::=::=::=::=:}
					19 & 1 & 9 & 0 & 6 & 0 & 6 & 9 & 3& 10 & 0 & 0 & 1\\
			\end{tabular}}	
		\end{varwidth}
	\end{center}
\end{table}

After a careful study of the resolution of singularities of the surface $S_6$ we get:
\begin{align*} &Z_{S_{6},0,0}=\dfrac{1}{(1-T)(1-qT)^{19}(1-q^2T)}, \quad Z_{S_{6},0,1}=\dfrac{1}{1-\beta_{q}T}, \quad Z_{S_{6},0,3}=\dfrac{1}{1-c_{q}qT}, \quad Z_{S_{6},0,5}=\dfrac{1}{1-\bar{\beta_{q}}T}\\
& Z_{S_{6},1,0}=\dfrac{1}{(1-T)^{3}(1-qT)^{18}}, \quad Z_{S_{6},2,0}=\dfrac{1}{(1-T)^{6}(1-qT)^{15}}, \quad Z_{S_{6},3,0}=\dfrac{1}{(1-T)^{10}(1-qT)^{10}}\\
& Z_{S_{6},4,0}=\dfrac{1}{(1-T)^{16}(1-qT)^{6}}, \quad Z_{S_{6},2,0}=\dfrac{1}{(1-T)^{18}(1-qT)^{3}}, \quad Z_{S_{6},3,2}=1-\delta_{q}T, \quad Z_{S_{6},3,2}=1-\bar{\delta_{q}}T, \\ &Z_{E_{6},0,0}=\dfrac{1}{(1-T)(1-qT)^{}}, \quad Z_{E_{6},0,1}=1-\alpha_{q}T, \quad Z_{E_{6},0,5}=1-\bar{\alpha_{q}}T,\quad Z_{E_{6},2,0}=Z_{E_{6},3,0}=Z_{E_{6},4,0}=\dfrac{1}{(1-T)^{2}},\\
& Z_{E_{6},1,0}=Z_{E_{6},5,0}=Z_{E_{6},2,3}=Z_{E_{6},3,2}=Z_{E_{6},3,4}=Z_{E_{6},4,3}=\dfrac{1}{(1-T)^{}}, 
\end{align*}
where $\alpha_{q}$ $\&$ $\bar{\alpha_{q}}$ (resp. $\beta_{q}$ $\&$ $\bar{\beta_{q}}$, $\delta_{q}$ $\&$ $\bar{\delta_{q}}$) are traces of the Frobenius map $\Frob_{q}$ on $H^{1}(E_{6})$ (resp. $T(S_{6})$ $\&$ $H^{1}(F)$ for a unique elliptic curve $F$ in $S_{6}$ fixed by $\alpha_{S_{6}}$). The curve $F$ is in fact a twist of $E_{6}$ by a character depending on $\lambda$. Finally, $c_{q}=\pm 1$ depending on properties of the unique singular fiber of type $\textbf{IV}$. Namely, this fiber is a union of three lines, one of them is fixed and remaining two are swapped by $\alpha_{S_{6}}.$ Difference of these two lines generates one dimensional subspace $H^{1,1}(S_{6})_{\zeta_{6}^3}.$ Then $c_{q}=+1$ if these two curves are defined over $\FF_{q}$ and $c_{q}=-1$ in the opposite direction. The explicit value of $\alpha_{q}$,  $\bar{\alpha_{q}}$, $\beta_{q}$, $\bar{\beta_{q}}$, $\delta_{q},$ $\bar{\delta_{q}}$ and $c_{q}$ depends on parameter $\lambda.$ Moreover $Z_{S_{6},k,j}=Z_{E_{6},k,j}=1$ for all remaining cases. Applying Theorem \ref{zetawzorek} we get:
\bgroup\allowdisplaybreaks
\begin{align*}&Z_{q}\left(\tylda{(S_{6}\times E_{6})/\ZZ_{6}}\right)=\frac{(1-\alpha_{q}\beta_{q}T)(1-\delta_{q}T)(1-\bar{\delta_{q}}T)(1-\bar{\alpha_{q}}\bar{\beta_{q}}T)}{(1-T)(1-qT)^{103}(1-q^2T)^{103}(1-q^3T)}. \end{align*}
\egroup

Using the same method we are able to compute zeta functions of higher dimensional quotients $\tylda{(S_{6}\times E_{6}^{n-1})/\ZZ_{6}^{n-1}}.$ For higher value of $n$ computations are much more involved but can be easily carried out with computer algebra system. 

In the table below we collect results for $n=2,3,4,$ for $n\geq 5$ formulas are too long to display. 

\begin{table}[H]
	\begin{center}
		\begin{varwidth}{\textheight}\rowcolors{2}{gray!25}{white}
			\rowcolors{2}{gray!25}{white}
			\resizebox{0.58\textheight}{!}{
				\begin{tabular}{c||c}
					\hhline{~|~}
					\cellcolor{gray!30}\parbox[c]{0.05\hsize}{\centering\setlength{\fboxrule}{3pt}\fcolorbox{gray!30}{gray!30}{$n$}}	 &\cellcolor{gray!30}\parbox[c]{0.7\hsize}{\centering\setlength{\fboxrule}{3pt}\fcolorbox{gray!30}{gray!30}{$\displaystyle Z_q(Y_{6,n})=Z_{q}\Big(\tylda{(S_{6}\times E_{6}^{n-1})/\ZZ_{6}^{n-1}}\Big)$}}  \\
					\hhline{=::=:}
					\parbox[c]{0.05\hsize}{\centering\setlength{\fboxrule}{1pt}\fcolorbox{white}{white}{$2$}} &   \parbox[c]{0.7\hsize}{\centering\setlength{\fboxrule}{5pt}\fcolorbox{white}{white}{$\displaystyle{\frac{(1-\alpha_{q}\beta_{q}T)(1-\delta_{q}T)(1-\bar{\delta_{q}}T)(1-\bar{\alpha_{q}}\bar{\beta_{q}}T)}{(1-T)(1-qT)^{103}(1-q^2T)^{103}(1-q^3T)}}$}} \\
					\parbox[c]{0.05\hsize}{\centering\setlength{\fboxrule}{1pt}\fcolorbox{gray!25}{gray!25}{$3$}}
					
					& \parbox[c]{0.7\hsize}{\centering\setlength{\fboxrule}{3pt}\fcolorbox{gray!25}{gray!25}{$\displaystyle{{\frac {1}{ \left( 1-T \right)  \left( 1- qT \right) ^
										{340}  \left( {{1-
												\it \bar{\alpha}_q}}^{2}{\it \bar{\beta}_q}\,T \right) \left(1- {{\it \alpha_q}}^{
											2}{\it \beta_q}\,T \right) \left(1- {q}^{2}T \right) ^{1402}\left(1 - {q}^{2}c_qT \right) ^{2}  \left(1- {q}^{3}T \right) ^{340}    
										\left(1- {q}^{4}T \right) }}}$}} \\
					\parbox[c]{0.05\hsize}{\centering\setlength{\fboxrule}{1pt}\fcolorbox{white}{white}{$4$}} &
					\parbox[c]{0.7\hsize}{\centering\setlength{\fboxrule}{3pt}\fcolorbox{white}{white}{$\displaystyle{{\frac { \left( {{1-\it \alpha_q}}^{3}{\it \beta_q}\,T \right)  \left( 1-{q}^{2}{
											\it \delta_q}\,T \right)  \left(1- {q}^{2}{\it \bar{\delta_q}}\,T\right)  \left(1- {{
												\it \bar{\alpha_q}}}^{3}{ \it \bar{\beta_q}}\,T \right) }{\left( 1-T \right)  \left(1- qT
										\right) ^{868} \left(1- {q}^{2}T\right)^{9548}
										\left(1- {q}^{2}{\it c_q}T \right) 
										\left(1- {q}^{3}{\it c_q}T\right) \left(1- {q}^{3}T \right) ^{9548}  \left(1- {q}^{4}T \right) ^{868}\left(1- {q}^{5}T\right)}}}$}}\\
				\end{tabular}
			}
		\end{varwidth}
	\vspace{2mm}
		\captionof{table}{Zeta function of $Y_{6,2}$, $Y_{6,3}$, $Y_{6,4}$}\label{zetaBV6}
	\end{center}
\end{table}

\printbibliography 
\end{document}